\newtheorem{theorem}{Theorem}[section]
\newtheorem{lemma}[theorem]{Lemma}
\newtheorem{proposition}[theorem]{Proposition}
\newtheorem{defn}[theorem]{Definition}
\newtheorem{prop}[theorem]{Proposition}
\newtheorem{thm}[theorem]{Theorem}
\newtheorem{cor}[theorem]{Corollary}
\newtheorem{problem}[theorem]{Problem}
\theoremstyle{definition}
\newtheorem{remark}[theorem]{Remark}
\newcommand\tr{\mathop{\rm tr}}
\newcommand\osp{\mathop{\rm osp}}
\newcommand\osy{\mathop{\rm osy}}
\newcommand{\Span}{\operatorname{Span}}
\newcommand{\inner}[2]{\left\langle {#1}, {#2} \right\rangle}
\newcommand{\cl}[1]{\mathcal{#1}}
\newcommand{\bb}[1]{\mathbb{#1}}
\begin{document}

\title[Lov\'{a}sz theta type norms and Operator Systems]{Lov\'{a}sz theta type norms and Operator Systems}

\author[Carlos Ortiz and Vern Paulsen]{Carlos M. Ortiz and Vern I. Paulsen}

\address{Department of Mathematics, University of Houston,
Houston, Texas 77204-3476, U.S.A.}
\email{vern@math.uh.edu}

\address{Department of Mathematics, University of Houston,
Houston, Texas 77204-3476, U.S.A.}
\email{cortiz@math.uh.edu}

%\thanks{This work supported in part by NSERC (Canada), NSF (USA), and the Royal Society (United Kingdom)}
\keywords{operator system, semidefinite programs, graph theory}
%\subjclass[2010]{Primary 46L06, 46L07; Secondary 46L05, 47L25, 47L90}

\begin{abstract} To each graph on $n$ vertices there is an associated subspace of the $n \times n$ matrices called the {\it operator system} of the graph. We prove that
two graphs are isomorphic if and only if their corresponding operator systems are unitally completely order isomorphic. This means that the study of graphs is equivalent to the study of these special operator systems up to the natural notion of isomorphism in their category. We define new graph theory parameters via this identification. Certain quotient norms that arise from studying the operator system of a graph give rise to a new family of parameters of a graph. We then show basic properties about these parameters and write down explicitly how to compute them via a semidefinte program, and discuss their similarities to the Lov\'{a}sz theta function. Finally, we explore a particular parameter in this family and establish a sandwich theorem that holds for some graphs.
\end{abstract}
\date{\today}

%\marginpar{\tiny change in abstract}

\maketitle
\section{Introduction}
The classic work of Shannon\cite{sh} associated a {\it confusability graph} to a binary channel and argued that the zero error capacity of the channel was a parameter definable solely in terms of this graph and its products. Later, Lov\'{a}sz\cite{lo} introduced his theta function, which he showed was an upper bound for Shannon's capacity. He presented many formulas for computing his theta function, which are optimization problems over a certain vector space of matrices associated with the graph. There is now a rich literature on Lov\'{a}sz's theta function and it plays an important role in both graph theory and binary information theory.
  
In analogy with the work of Shannon and Lov\'{a}sz, for a quantum channel, Duan, Severini and Winter \cite{dsw} have established that some notions of quantum capacity only depend on a vector space of matrices associated with the quantum channel, i.e., two quantum channels that define the same vector space have the same capacity. They argued that the study of these spaces of matrices should be treated as a kind of {\it non-commutative graph theory}.
In this paper we build upon that idea.

The vector spaces of matrices associated with a graph by Lov\'{a}sz and with a quantum channel by Duan, Severini and Winter are both examples of finite dimensional {\it operator systems}.  Given a graph $G$ we let $\cl S_G$ denote this operator system of matrices that is associated with $G$.

The natural notion of equivalence of operator systems is {\it unital, complete order isomorphism}. Our first main result shows that two graphs $G$ and $H$ are graph isomorphic if and only if the operator systems $\cl S_G$ and $\cl S_H$ are unitally, completely order isomorphic.  Thus, there is no difference between studying graphs and studying this special family of operator systems.  In particular, it should be possible to relate all graph parameters of $G$ to properties of $\cl S_G$. In this paper we are more interested in the converse. Namely, we begin with parameters that are ``natural'' to associate with operator systems and attempt to relate them to classical graph parameters.

The Lov\'{a}sz theta function naturally fits this viewpoint. Quotients of operator systems come equipped with two norm structures and we will show that a generalization of the theta function, introduced in \cite{dsw}, is an upper bound for the ratio between these two naturally ocuring norms.

%%%%%%%%%%%%%%%%%
\section{Preliminaries}

 As customary, we let $\cl B(\cl H)$ denote the space of bounded linear operators on some Hilbert space $\cl H$, let $M_n:= \cl B(\mathbb{C}^n)$, and let $E_{i,j}$ $1 \le i,j \le n$ be the canonical matrix units. We call a vector subspace $ \cl S \subseteq \cl B(\cl H)$ {\it *-closed} provided $X \in \cl S$ implies that $X^* \in \cl S,$ where $X^*$ denotes the adjoint of $X.$ We define $\cl S$ to be an {\it operator system} if $\cl S$ is a unital  $*$-closed subspace of $\cl B(\cl H)$. 

Operator systems are naturally endowed with a {\it matrix ordering} and can be axiomatically characterized in theses terms.  See, for example \cite{vpbook}. Briefly, given any vector space $\cl S,$ we let $M_n(\cl S)$ denote the vector space of $n \times n$ matrices with entries from $\cl S.$ We identify $M_n(\cl B(\cl H)) \equiv \cl B(\cl H \otimes \bb C^n)$ and let $M_n(\cl B(\cl H))^+$ denote the positive operators on the Hilbert space $\cl H \otimes C^n$. Given  $\cl S \subseteq \cl B(\cl H),$ we set $M_n(\cl S)^+ = M_n(\cl B(\cl H))^+ \cap M_n(\cl S).$   

The natural notion of equivalence between two operator systems is {\it unital, complete order isomorphism}.
Given two operator systems $\cl S$ and $\cl T$, a linear map $\phi:\cl S \to \cl T$ is called {\it completely positive} provided that for all $k,$ $(X_{i,j}) \in M_k(\cl S)^+$ implies that $(\phi(X_{i,j})) \in M_k(\cl T)^+.$ The map $\phi$ is {\it unital} when $\phi(I) = I.$ The map $\phi$ is called a {\it complete order isomorphism} if and only if $\phi$ is one-to-one, onto and $\phi$ and $\phi^{-1}$ are both completely positive. This last condition is equivalent to requiring that for all $n$,  $(X_{i,j}) \in M_n(\cl S)^+$ if and only if $(\phi(X_{i,j})) \in M_n(\cl T)^+.$

We will define a graph $G$ on $n$ vertices to be a subset of $\{1,2,...,n\}\times \{1,2,...,n\}$ with the property that $(i,j)\in G \iff (j,i)\in G$ for all $(i,j) \in G$ and $(i,i)\not\in G$ for all $i\in \{1,2,...,n\}$. We call the elements of $\{1,2,...,n\}$ the {\it  vertices} of G and say that two vertices $i$ and $j$ are {\it connected by an edge} when $(i,j) \in G.$ Given a graph $G$ on $n$ vertices we set $\widetilde{G}= G \cup \{ (i,i): 1 \le i \le n \}.$ We let $\overline{G}$ denote the {\it complement of the graph} $G$, that is, the graph with the property that $(i,j)\in \overline{G} \iff (i,j)\not \in \widetilde{G}.$ 

 Let 
\[R_G= \sum_{(i,j) \in \widetilde{G}} E_{i,j} =I+A_G\] where $I$ is the identity matrix and
\[A_G= \sum_{(i,j) \in G} E_{i,j}\]
 denotes the usual adjacency matrix of $G$. We define the {\it operator system of the graph $G$} to be $\cl S_G:=\Span\{E_{ij}: (i,j)\in \widetilde{G} \}.$

Given a self-adjoint $n \times n$ matrix $A$, we let $\lambda_1(A) \ge \ldots \ge \lambda_n(A)$ denote the eigenvalues of $A.$ It is known that $\lambda_1(A_G) \ge - \lambda_n(A_G)$, $\|A_G\|= \lambda_1(A_G)$, and  $\|R_G\|= 1 + \lambda_1(A_G)$ \cite{sp}. Let $G$ and $H$ be graphs on $n$ and $m$ vertices, respectively. We define $G\boxtimes H$ to be the {\it strong product} of the graphs, that is, the graph on $nm$ vertices with, 
 \begin{align*}
 &( (i,j), (k,l) )\in  G\boxtimes H \iff \\ 
 &(i,k)\in G\ and\ j=l\ or \\
  &(j,l)\in H\ and\ i=k\ or \\ 
 &(i,k)\in G\ and\ (j,l)\in H 
 \end{align*}
This product satisfies,
$$\cl S_{G\boxtimes H}=\cl S_G\otimes \cl S_H$$

%Define $d_{\infty}(G)=\inf\{||R_G+K|| : K\in S_G^{\perp}\}$. Notice that $d_{\infty}(G)$ is the quotient norm of $R_G$ in the operator space $M_n/S_G^{\perp}$ [3]. Also, we define 
%$$||R_G+S_G^{\perp}||_{osy}=\inf\Big\{\lambda : \begin{pmatrix} \lambda I+K_1 & R_G+K_2 \\ R_G+K^*_2 & \lambda I+K_3\end{pmatrix}\in M_2(M_n)^+,\ \mbox{for } K_i\in S_G^{\perp}\Big\}.$$ Note that this parameter is actually the operator system quotient norm of $R_G$ in $M_n/S_G^{\perp}$ [3]. 
%%%%%%%%%%%%%%%%%%%%%%%%%%%%%%%%%%%%%%
\section{The Isomorphism Theorem}

In this section we prove that two graphs are isomorphic if and only if their operator systems are unitally, completely order isomorphic. This shows that the morphism $G \to \cl S_G$ in a certain sense loses no information. It suggests that there should be a dictionary for translating graph theoretical parameters into parameters of these special operator systems,  which one could then hope to generalize to all operator systems. In particular, the ``isomorphism'' problem for operator subsystems of $M_n$ is at least as hard as the isomorphism problem for graphs.

First, we do the ``easy'' equivalence.
Suppose that we are given two graphs $G_1, G_2$ on $n$
  vertices that are isomorphic via a permutation $\pi: \{1,...,n \} \to
\{1,...,n \},$ so that $G_2= \{ (\pi(i), \pi(j)): (i,j) \in G_1 \}.$
 If we define a linear map $U_\pi: \mathbb{C}^n \longrightarrow \mathbb{C}^n$ via $U_\pi (e_j) = e_{\pi(j)},$ where $\{ e_j: 1 \le j \le n \}$ denotes the canonical orthonormal basis for $\bb C^n,$
then it is not hard to see that $ U_{\pi}$ is a unitary matrix and that $U_\pi^* \cl S_{G_2} U_\pi = \cl S_{G_1}$. Moreover, the map $\phi: \cl B(\bb C^n)  \to \cl B(\bb C^n)$ defined by $\phi(X) = U_{\pi}^*X U_{\pi}$ is a unital, complete order isomorphism. Hence, the restriction $\phi: \cl S_{G_2} \to \cl S_{G_1}$ is a unital, complete order isomorphism between the operator systems of the graphs.

Conversely, if there exists a permutation such that $U_\pi^* \cl S_{G_2} U_\pi = \cl S_{G_1}$, then
$G_1$ and $G_2$ are isomorphic via $\pi$. To see this, note that we have
\begin{align*}
 (U_\pi E_{i,j} U_\pi^{-1})(e_k) &= U_\pi E_{i,j} e_{\pi^{-1}(k)}\\
                                                                                  &=U_\pi e_i  \hspace{0.3cm}  \textmd{(whenever} \hspace{0.3cm} j = \pi^{-1}(k)\hspace{0.3cm} \textmd{and}\hspace{0.3cm} 0 \hspace{0.3cm}\textmd{otherwise)}.\\
                                                                                  &=e_{\pi (i)} \hspace{0.3cm} \textmd{(since} \hspace{0.3cm} j = \pi^{-1}(k) \implies \pi (j) = k)
                                                                                  \end{align*}

                                                                                  Thus $U_\pi E_{i,j} U_\pi^{-1} = E_{\pi (i), \pi (j)}$.

The next result arrives at the same conclusion even when  the unitary is not induced by a permutation.

\begin{prop}
 Let $G_1$ and $G_2$ be graphs on $n$ vertices. If there exists a unitary $U$ such that $U^*\cl S_{G_1}U=\cl S_{G_2}$, then  $G_1$ and $G_2$ are isomorphic.
\end{prop}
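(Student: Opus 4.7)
The plan is to unpack what $U^{*}\cl S_{G_1}U=\cl S_{G_2}$ says about the entries of $U$, and then extract a bona fide permutation from $U$ via Birkhoff's theorem.

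For the first step, I would translate the subspace identity into an entrywise condition. For $(i,j)\in\widetilde{G_1}$ a direct expansion gives $(U^{*}E_{ij}U)_{kl}=\overline{U_{ik}}\,U_{jl}$, and since $U^{*}E_{ij}U\in\cl S_{G_2}$ this quantity must vanish whenever $(k,l)\notin\widetilde{G_2}$. Applying the same reasoning to $UE_{kl}U^{*}\in\cl S_{G_1}$ gives $U_{ik}\overline{U_{jl}}=0$ whenever $(k,l)\in\widetilde{G_2}$ and $(i,j)\notin\widetilde{G_1}$. Taking contrapositives of both halves and combining them yields the symmetric biconditional: whenever $U_{ik}\neq 0$ and $U_{jl}\neq 0$,
\[(i,j)\in\widetilde{G_1}\iff(k,l)\in\widetilde{G_2}.\]

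The second step is to produce a permutation living in the support of $U$. Because $U$ is unitary, the matrix $(|U_{ij}|^{2})$ has all row and column sums equal to $1$ and is therefore doubly stochastic. By Birkhoff's theorem it can be written as a convex combination of permutation matrices, and any permutation $\pi\in S_n$ appearing with positive coefficient in that decomposition must satisfy $U_{i,\pi(i)}\neq 0$ for every $i$.

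Finally, I would check that such a $\pi$ is a graph isomorphism. Taking $k=\pi(i)$ and $l=\pi(j)$ in the biconditional from step one produces $(i,j)\in\widetilde{G_1}\iff(\pi(i),\pi(j))\in\widetilde{G_2}$ for all $i,j$; since $\pi$ is a bijection, for $i\neq j$ one has $\pi(i)\neq\pi(j)$, so the loop terms drop out and the biconditional reduces to $(i,j)\in G_1\iff(\pi(i),\pi(j))\in G_2$, which is exactly the condition for $\pi$ to be a graph isomorphism. I do not foresee a genuine obstacle here; the core observation is that the modulus-squared pattern of any unitary is automatically doubly stochastic, and this is precisely what converts the "support biconditional" into an honest permutation via Birkhoff, with no hypothesis needed on connectivity or any other combinatorial feature of $G_1,G_2$.
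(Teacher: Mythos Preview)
Your proof is correct. The entrywise computation is accurate, the passage to the biconditional is valid, and the Birkhoff step does exactly what you claim: since $(|U_{ij}|^2)$ is doubly stochastic, any permutation $\pi$ occurring with positive weight in a Birkhoff decomposition satisfies $|U_{i,\pi(i)}|^2\ge c_\pi>0$, hence lies in the support of $U$, and then the biconditional gives the isomorphism.

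The paper's proof takes a genuinely different route. Rather than working entrywise, it exploits the bimodule structure of $\cl S_{G_2}$ over the conjugated diagonal algebra $U^*\cl D_n U$: it looks at the rank-one projections $P_k=U^*E_{k,k}U$, analyzes how left multiplication by $P_1$ interacts with the matrix units in $\cl S_{G_2}$, and uses this to show that a block unitary $W_1=V\oplus I$ can be chosen so that conjugation by $W_1$ fixes $\cl S_{G_2}$ while sending $P_1$ to $E_{1,1}$. Iterating, one reduces to the case of a diagonal unitary, for which $W^*\cl S_{G_1}W=\cl S_{G_1}$ trivially. Your argument is shorter and more combinatorial: once the support biconditional is in hand, Birkhoff's theorem extracts a permutation in one stroke, with no need to relabel vertices or build a sequence of auxiliary unitaries. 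The paper's approach, by contrast, is more operator-algebraic in flavor and makes the role of the diagonal MASA explicit; it effectively proves the stronger statement that $U$ can be corrected to a \emph{monomial} unitary (diagonal times permutation) implementing the equivalence, whereas your argument only produces the permutation.
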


\begin{proof}

Let $P_k = U^*E_{k,k}U$, $k = 1,\dots,n$
and $\cl C = {\rm span}\{P_k : k = 1,\dots,n\}$.
Since $\cl S_{G_1}$ is a bimodule over the algebra $\cl D_{n}$
of all diagonal matrices, $\cl S_{G_2}$ is a bimodule
over $\cl C$. Note that each $P_{k}$ is a rank one operator.

Write
$P_1 = (\lambda_i\overline{\lambda_j})_{i,j=1}^{n}$.
Set $\Lambda_1 = \{i : \lambda_i\neq 0\}$, and
renumber the vertices of $G_2$ so that
$\Lambda_1 = \{1,2,\dots,k\}$, for some $k \leq n$.
Suppose that $E_{i,j}\in  \cl S_{G_1}$ for some $i\in \{1,\dots,k\}$ and some $j > k$.
We have that the matrix $P_1 E_{i,j}$ has as its $(l,j)$-entry, where
$l\in \{1,\dots,k\}$, the scalar $\lambda_l\overline{\lambda_j}\neq 0$.
It follows that if $(i,j)\in G_2$, where
$i\in \{1,\dots,k\}$ and $j > k$, then $(l,j)\in G_2$ for all $l = 1,\dots,k$.

It now follows that if $W_1\in M_n$ is a unitary matrix of the form
$W_1 = V\oplus I_{n-k}$, where $V\in M_k$ is unitary and
$I_{n-k}$ is the identity of rank $n-k$, then $W_1^* \cl S_{G_2}W_1 = \cl S_{G_2}$.
Choose such a $W_1$ with the property that $V^*P_1V = E_{1,1}$.
Then $W_1^*U^* \cl S_{G_1}UW_1 = \cl S_{G_2}$ and
$W_1^*U^*E_{1,1}UW_1 = E_{1,1}$.

Now let $Q_2 = W^*U^*E_{2,2}UW$; then $Q_2$ is a rank one operator in $\cl S_{G_2}$;
write
$Q_2 = (\mu_i\overline{\mu_j})_{i,j=1}^n$ and set
$\Lambda_2 = \{i : \mu_i\neq 0\}$. Since $E_{1,1}E_{2,2} = E_{2,2}E_{1,1} = 0$,
we have that $E_{1,1} Q_2 = Q_2E_{1,1} = 0$.
This implies that $1\not\in \Lambda_2$. Now proceed as in the previous paragraph to
define a unitary $W_2\in M_n$ such that $W_2^*W_1^*U^* \cl S_{G_1}UW_1W_2 = \cl S_{G_2}$
and, after a relabeling of the vertices of $G_2$, we have that
$W_2^*W_1^*U^*E_{1,1}UW_1W_2 = E_{1,1}$ and $W_2^*W_1^*U^*E_{2,2}UW_1W_2 = E_{2,2}$.

A repeated use of the above argument shows that, up to a relabeling of the vertices of $G_2$,
we may assume that there exists a unitary $W\in M_n$ such that
$W^*\cl S_{G_1}W = \cl S_{G_2}$ and $W^*E_{i,i}W = E_{i,i}$ for each $i$.
But this means that $We_i = \lambda_i e_i$ with $|\lambda_i| =1$ for each $i$ (here $\{e_i\}$ is the standard basis of $\bb{C}^n$).
Hence $W$ is a diagonal unitary, and so $W^*\cl S_{G_1}W =\cl S_{G_1}$ and so
up to re-ordering, $\cl S_{G_1} = \cl S_{G_2}$, which implies that
$G_1$ is isomorphic to $G_2$.

\end{proof}

Given any operator system $\cl S$, each time we choose a unital complete order embedding  $\gamma:\cl S \to\cl B(\cl H)$ we can consider the C*-algebra generated by the image, $C^*(\gamma(S)) \subseteq \cl B(\cl H).$ The theory of the C*-envelope guarantees that among all such generated C*-algebras, there is a universal quotient, denoted $C^*_e(\cl S)$ and called the {\it C*-envelope} of $\cl S.$  See \cite[Chapter~a]{vpbook}.

\begin{thm} Let $G$ be a graph on $n$ vertices.  Then the C*-subalgebra of $M_n$ generated by $\cl S_G$ is the C*-envelope of $\cl S_G.$
\end{thm}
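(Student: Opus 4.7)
The plan is to use the ideal-theoretic characterization of the C*-envelope: if $\cl S \subseteq A$ is an operator system generating a unital C*-algebra $A$, then $A$ is the C*-envelope if and only if $A$ has no nonzero \emph{boundary ideal}, i.e., no nonzero (closed, two-sided) ideal $J \subseteq A$ for which the quotient map $A \to A/J$ restricts to a complete isometry on $\cl S$. Setting $A = \cstar(\cl S_G) \subseteq M_n$, I would show directly that no such nonzero ideal can exist.

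First I would describe $A$ explicitly. Since $\cl S_G$ contains $E_{i,i}$ for every vertex $i$ and $E_{i,j}$ for every $(i,j)\in G$, and since $E_{i,j}E_{j,k}=E_{i,k}$, the algebra $A$ contains $E_{i,k}$ whenever $i$ and $k$ lie in the same connected component of $G$. Conversely $A$ can contain no $E_{i,k}$ with $i,k$ in different components, because $\cl S_G$ is supported on the matrix units indexed by $\widetilde{G}$, which respects the component decomposition, and this support is preserved under the multiplication in $M_n$. Hence
\[
A \;=\; \bigoplus_{c} M_{|V_c|},
\]
where $V_c$ ranges over the connected components of $G$, with isolated vertices contributing one-dimensional summands.

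The second step is to rule out boundary ideals. Every ideal of a finite direct sum of full matrix algebras is itself a subsum, so any nonzero ideal $J \subseteq A$ must contain at least one full summand $M_{|V_c|}$, and in particular contains the rank-one projection $E_{i,i}$ for each $i\in V_c$. But $E_{i,i}$ already lies in $\cl S_G$ and has norm $1$ in $M_n$, while its image in $A/J$ is $0$. Hence the quotient map is not even isometric on $\cl S_G$, let alone completely isometric, so the Shilov boundary ideal of $\cl S_G$ inside $A$ is zero, and $A = \cstare(\cl S_G)$.

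I do not expect a substantive obstacle. The one step that requires a little care is the explicit identification of $\cstar(\cl S_G)$ as the block-diagonal algebra above; after that, the fact that every diagonal matrix unit already belongs to $\cl S_G$ makes the boundary-ideal test essentially automatic.
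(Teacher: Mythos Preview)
Your proof is correct and follows essentially the same route as the paper: both identify $\cstar(\cl S_G)$ with the block-diagonal algebra $\bigoplus_c M_{|V_c|}$ indexed by connected components, and both then use that every diagonal matrix unit $E_{i,i}$ already lies in $\cl S_G$ to rule out any nontrivial quotient. The only cosmetic difference is that you phrase the second step via the Shilov boundary ideal (any nonzero ideal contains some $E_{i,i}$, so the quotient kills a norm-one element of $\cl S_G$), while the paper works directly with the surjection $\pi: \cstar(\cl S_G) \to \cstare(\cl S_G)$ and argues block by block that $\pi$ is injective; your packaging has the mild advantage of treating the connected and disconnected cases uniformly.
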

\begin{proof} Let $C^*(\cl S_G) \subseteq M_n$ be the C*-subalgebra generated by $\cl S_G.$  By the general theory of the C*-envelope, there is a *-homomorphism $\pi: C^*(\cl S_G) \to C^*_e(\cl S_G)$ that is a complete order isomorphism when restricted to $\cl S_G.$

First assume that $G$ is connected.
Then for any $i$ and $j$ if one uses a path from $i$ to $j$ in $G_1$ then this path gives a way to express $E_{i,j}$ as a product of matrix units that belong to $\cl S_{G}.$ Thus, the C*-subalgebra of $M_n$ generated by $\cl S_{G}$ is all of $M_n.$ But since $M_n$ is irreducible, $\pi$ must be an isomorphism.

For the general case, assume that $G$ has connected components of sizes $n_1, ..., n_k$ with $n_1+ \cdots + n_k = n.$ By the argument above one can see that $C^*(\cl S_G) \equiv M_{n_1} \oplus \cdots \oplus M_{n_k}.$ If for each component $C_j$ one lets $P_j= \sum_{i \in C_j} E_{i,i},$ then these projections belong to the center of $C^*(\cl S_G)$ and $P_j C^*(\cl S_G) P_j$ is *-isomorphic to $M_{n_j}.$
Also, their images $\pi(P_j)$ belong to the center of $C^*_e(\cl S_G).$ 

Thus, $\pi(P_j) C^*_e(\cl S_G) \pi(P_j)$ is either 0 or *-isomorphic to $M_{n_j}.$

Look at the diagonal matrices $\cl D_n \subseteq \cl S_G.$ Since $\pi$ is a *-homomorphism on the subalgebra and a complete order isomorphism on this subalgebra, it is a *-isomorphism when restricted to $\cl D_n.$ Thus, $\pi(P_j) \ne 0$  and so,
these central projections allow us to decompose $C^*_e(\cl S_G) = A_1 \oplus \cdots A_k,$ with $A_j \equiv M_{n_j}.$
\end{proof}

\begin{thm} Let $G_1$ and $G_2$ be graphs on $n$ vertices. The following are equivalent:
\begin{enumerate}
\item
$G_1$ is isomorphic to $G_2$,
\item there exists a unitary $U$ such that $U^*\cl S_{G_1} U =\cl S_{G_2},$
\item  $\cl S_{G_1}$ is unitally, completely order isomorphic to $\cl S_{G_2}$.
\end{enumerate}
\end{thm}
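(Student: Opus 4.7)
The plan is to observe that two of the three implications follow from what precedes, so that the real work lies in one new implication. Specifically, (1)$\Rightarrow$(2) was established via the permutation unitary $U_\pi$ in the paragraph preceding Proposition 3.1, (2)$\Rightarrow$(1) is Proposition 3.1 itself, and (2)$\Rightarrow$(3) holds because conjugation by any unitary in $M_n$ is a $*$-automorphism of $M_n$, hence restricts to a unital complete order isomorphism on any operator subsystem. Thus only (3)$\Rightarrow$(2) requires a genuinely new argument.

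Given a unital complete order isomorphism $\phi:\cl S_{G_1}\to\cl S_{G_2}$, the strategy is to pass through the $\cstar$-envelope. By the preceding theorem, $\cstare(\cl S_{G_i})=\cstar(\cl S_{G_i})\subseteq M_n$ for $i=1,2$. The functorial property of the $\cstar$-envelope (obtained by applying its universal property to both $\phi$ and $\phi^{-1}$) extends $\phi$ uniquely to a $*$-isomorphism $\tilde\phi:\cstar(\cl S_{G_1})\to\cstar(\cl S_{G_2})$. It then remains to show that $\tilde\phi$ is spatial, that is, implemented by conjugation with a unitary $U\in M_n$; once this is done, $U^*\cl S_{G_1}U = \tilde\phi(\cl S_{G_1}) = \cl S_{G_2}$, giving (2).

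To produce $U$, I would invoke the structural description from the proof just above: if the connected components of $G_i$ have sizes $n_1,\dots,n_k$, then $\cstar(\cl S_{G_i})\cong M_{n_1}\oplus\cdots\oplus M_{n_k}$, with each summand $M_{n_j}$ acting on its own subspace $\bb C^{n_j}$ inside $\bb C^n$, so the inclusion $\cstar(\cl S_{G_i})\hookrightarrow M_n$ is multiplicity-free. Consequently the two $*$-representations of $\cstar(\cl S_{G_1})$ on $\bb C^n$ -- namely the inclusion, and the composition of $\tilde\phi$ with the inclusion of $\cstar(\cl S_{G_2})$ -- are both multiplicity-free with identical irreducible constituents (matched up by $\tilde\phi$ and so having matching dimensions). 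The standard uniqueness theorem for finite-dimensional $*$-representations on a common Hilbert space then yields the desired unitary $U\in M_n$. The main obstacle is precisely this final spatial upgrade of $\tilde\phi$ to an inner conjugation; everything else is either already in the excerpt or an immediate functorial consequence of the $\cstar$-envelope theorem just proved.
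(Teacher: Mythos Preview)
Your proposal is correct and follows essentially the same route as the paper: both reduce to (3)$\Rightarrow$(2), extend the unital complete order isomorphism to a $*$-isomorphism of the $\cstar$-envelopes via the universal property, and then argue that this $*$-isomorphism of multi-matrix algebras is spatially implemented on $\bb C^n$. The only cosmetic difference is that the paper treats the connected case first and then takes a direct sum of unitaries over components, whereas you invoke the standard uniqueness theorem for multiplicity-free finite-dimensional $*$-representations in one stroke; these are the same argument.
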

\begin{proof}
We have shown above that (1) implies (3) and that (2) implies (1).  It remains to prove that (3) implies (2).

So assume that (3) holds and let $\phi:\cl S_{G_1} \to\cl S_{G_2}$ be a unital, complete order isomorphism. In this case, by \cite[Theorem~a.b]{vpbook} $\phi$ extends uniquely to a *-isomorphism, which we will denote by $\rho$, between their C*-envelopes. Since, by the previous theorem, the C*-envelopes are just the C*-subalgebras that they generate, we have $\rho: C^*(\cl S_{G_1}) \to C^*(\cl S_{G_2})$ is a unital *-isomorphism.

Suppose first that $G_1$ is connected. Then $M_n= C^*(\cl S_{G_1})$ is all of $M_n.$ Thus, $dim(C^*(\cl S_{G_2}))= dim(C^*(\cl S_{G_1})) = n^2,$ which forces $C^*(\cl S_{G_2}) = M_n.$

 Hence, $\rho:M_n \to M_n$ is a *-isomorphism. But every *-isomorphism of $M_n$ is induced by conjugation by a unitary, and so (2) holds.

Now assume that $G_1$ has connected components of sizes $n_1,...,n_k,$ with $n_1+ \cdots + n_k = n.$  In this case, applying the last theorem, we see that $C^*(\cl S_{G_1}) \equiv M_{n_1} \oplus \cdots \oplus M_{n_k}\equiv C^*_e(\cl S_{G_1}) \equiv C^*_e(\cl S_{G_2}) \equiv C^*(\cl S_{G_2}).$ Since $C^*(\cl S_{G_2}) \equiv M_{n_1} \oplus \cdots \oplus M_{n_k}$ one sees that $G_2$ has components of sizes $n_1, ..., n_k$ as well.

The central projections onto these components decomposes $\bb C^n$ into a direct sum of subspaces of dimensions $n_1, ..., n_k$ in two different ways and on each subspace the complete order isomorphism is implemented by conjugation by a unitary.  Thus, the complete order isomorphism is implemented by conjugation by the direct sum of these unitaries.
\end{proof}

%%%%%%%%%%%%%%%%%%%%%%%
\section{Quotients of Operator Systems and the Lov\'{a}sz Theta Function}
In this section we introduce some natural operator system parameters, which when specialized to graphs we will see are related to Lov\'{a}sz's theta function.

Given an operator system $\cl S,$ a subspace
 $\cl J\subseteq \cl S$ is called a {\it kernel} if there is an operator system $\cl T$ and a unital, completely positive (UCP) map $\phi: \cl S \to \cl T$ such that $\cl J = ker(\phi).$ Since every operator system $\cl T$ unital complete order embedding into $\cl B(\cl H)$ for some $\cl H$. There is no lost in generality in assuming that $\cl T = \cl B(\cl H)$ in the definition of a kernel.

  In \cite{kptt2010}, it was shown that the vector space quotient $\cl S/\cl J$ can be turned into an operator system, called the {\it quotient operator system} as follows.  Let $\cl D_n(\cl S/\cl J)$ be the set of all $ (x_{i,j}+ \cl J)\in M_n(\cl S/\cl J)$ for which there exists 
$(y_{i,j}) \in M_n(\cl J)$ such that $(x_{i,j} + y_{i,j})\in M_n(\cl S)^+$. 
Let $M_n(\cl S/\cl J)^+$ be the {\it Archimedeanisation} of $\cl D_n(\cl S/\cl J)$;  that is
$ (x_{i,j} + \cl J)\in M_n(\cl S/\cl J)^+$ if and and only if for every $\epsilon > 0$, 
$(x_{i,j} + \cl J) + \epsilon \dot 1_n\in \cl D_n(\cl S/\cl J)$. Here, $1_n$ is the element of 
$M_n(\cl S)$ whose  diagonal entries are all equal to  $1$ and all other entries are zero. Also, if $\cl J$ is finite dimensional, then we know that $\cl D_n(\cl S/\cl J)=M_n(\cl S/\cl J)^+$ so that this Archimedeanisation process is unnecessary by \cite{athesis}.

Every operator system is also an operator space. For this reason,
the quotient $\cl S/\cl J$ carries two,
in general distinct, operator space structures.
One is the canonical quotient operator space structure on $\cl S/\cl J$ 
arising from the fact that $\cl S$ and $\cl J$ are operator spaces.
On the other hand, the operator system quotient $\cl S/\cl J$ is an operator system and so carries a norm. Examples have been given to show that these two norms can be quite different. See  \cite{fp} for some important examples of this phenomenon.

To simplify notation, given $x \in \cl S$ we shall set $\dot x := x + \cl J \in \cl S/\cl J,$ and for $X= (x_{i,j}) \in M_n(\cl S)$ we set $\dot X := ( x_{i,j} + \cl J) \in M_n(\cl S/\cl J).$

Following \cite{kptt2010}, given $X \in M_n(\cl S)$ so that $\dot X \in M_n(\cl S/\cl J)$ we  let $\|\dot X\|_{\osp}$
(resp. $\|\dot X\|_{\osy}$) denote the operator space 
(resp. the operator system) quotient norm.  It is known that $\|\dot X \|_{\osy} \le \|\dot X\|_{\osp}$ for every $X \in M_n(\cl S)$ and every $n.$

We identify a kernel $\cl J$ in the operator system $\cl S$ with a kernel $\cl K$ in the operator system $\cl T$ provided the operator systems $\bb C 1 + \cl J$ and $\bb C 1 + \cl K$ are unitally completely order isomorphic.

\begin{defn} Let $\cl S$ be an operator system and let $\cl J
  \subseteq \cl S$ be a kernel. Then the {\bf relative n-distortion} is
\[ \delta_n(\cl S, \cl J) = \sup \{ \frac{\|\dot X \|_{\osp}}{\|\dot X
  \|_{\osy}}: X \in M_n(\cl S) \}\]  and we call $ \delta_{cb}(\cl S,
\cl J) = \sup \{ \delta_n(\cl S, \cl J) : n \in \bb N \}$ the {\bf
relative complete distortion}.
We call 
\[ \delta_n(\cl J) = \sup \{ \delta_n(\cl S, \cl J) \}\]
the {\bf absolute n-distortion}  and $\delta_{cb}(\cl J) = \sup \{
\delta_n(\cl J): n \in \bb N \}$ the {\bf complete distortion},
where the supremum is taken over all operator systems $\cl S$ that contain $\cl J$ as a kernel.
\end{defn}

When $n=1$ we simplify the notation by setting $\delta(\cl S, \cl J) = \delta_1(\cl S, \cl J)$ and $\delta(\cl J) = \delta_1(\cl J).$  We now wish to relate this to a Lov\'{a}sz theta type parameter, which was first introduced in \cite{dsw}.

\begin{defn} Let $\cl S$ be an operator system and let $\cl J \subseteq \cl S$ be a kernel. Then we set
\begin{multline*} \vartheta_n(\cl J) = \sup \{ \| 1_n +J \|_{M_n(\cl S)}: J \in M_n(\cl J) \text{ and } 1_n + J \ge 0 \} \\ \text{ and } \vartheta_{cb}(\cl J) = \sup \{ \vartheta_n(\cl J): n \in \bb N \}.\end{multline*}
\end{defn}

Again when $n=1$ we set $\vartheta(\cl J) := \vartheta_1(\cl J).$

\begin{remark}\label{tr0} If we let $\cl S= M_n$ and let $\cl J$ denote the set
  of diagonal matrices of trace 0, then $\cl J$ is a kernel and it
  follows from the characterization of the quotient $M_n/\cl J$ in
  \cite{fp} that $n \le \delta(M_n, \cl J).$ For any $J \in \cl J$ we
  see that $tr(I_n+J) =n$ and so when $I_n+ J \ge 0$ we see that
  $\|I_n+J\| \le n$. Letting $J$ be the
  diagonal matrix with diagonal entries, $(n-1, -1, \ldots, -1)$ we
  see have $\|I_n+J\|=n,$ and so $\vartheta(\cl J) = n.$
\end{remark}

%We set 
%$\|\dot X\|_{\osp} := \|\dot X\|_{\osp}^{(1)}$ and $\|\dot X\|_{\osy} := \|\dot %X\|_{\osy}^{(1)}$. In instances when we need to talk about
%two different quotient spaces, say $M_n/\cl J$ and $M_n/\cl K$, we will denote %the norms by $||X+\cl J||_{osp}^{(n)}$ ($\| X+\cl J\|_{\osy}^{(n)}$) 
%and $||X+\cl K||_{osp}^{(n)}$ ($\| X+\cl K\|_{\osy}^{(n)}$).

\begin{theorem}\label{th_te}
We have that $\delta(\cl J)\leq \vartheta(\cl J)$ and $\delta_{cb}(\cl J)\leq \vartheta_{cb}(\cl J)$. 
\end{theorem}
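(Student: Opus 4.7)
The plan is to prove the stronger matrix-level estimate $\delta_n(\cl J) \le \vartheta_n(\cl J)$ for every $n \ge 1$; the first stated inequality is then the case $n=1$ and the second follows on taking the supremum over $n$. So fix an operator system $\cl S$ with $\cl J \subseteq \cl S$ a kernel, fix $X \in M_n(\cl S)$, and set $s = \|\dot X\|_{\osy}$. The aim is to produce, for each $\epsilon > 0$, some $K \in M_n(\cl J)$ satisfying $\|X+K\| \le (s+\epsilon)\vartheta_n(\cl J)$, which bounds $\|\dot X\|_{\osp}$ by $s\,\vartheta_n(\cl J)$ and hence the ratio defining $\delta_n$.

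The key move is to exploit the standard $2\times 2$ characterization of the operator system norm: in $M_n(\cl S/\cl J)$,
\[
\|\dot X\|_{\osy} \;=\; \inf\Bigl\{ t \ge 0 \,:\, \begin{pmatrix} t\dot 1_n & \dot X \\ \dot X^* & t\dot 1_n \end{pmatrix} \in M_{2n}(\cl S/\cl J)^{+} \Bigr\}.
\]
Combined with the Archimedean description of the quotient matrix order, this yields for every $\epsilon > 0$ a matrix $J^\epsilon = \begin{pmatrix} J_1 & K \\ K^* & J_2 \end{pmatrix} \in M_{2n}(\cl J)$, with $J_1, J_2 \in M_n(\cl J)$ self-adjoint and $K \in M_n(\cl J)$, such that
\[
\begin{pmatrix} (s+\epsilon) 1_n + J_1 & X + K \\ (X+K)^* & (s+\epsilon) 1_n + J_2 \end{pmatrix} \;\geq\; 0 \quad \text{in } M_{2n}(\cl S).
\]

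Setting $P = (s+\epsilon) 1_n + J_1$ and $Q = (s+\epsilon) 1_n + J_2$, the diagonal blocks of a positive matrix are positive, so $P,Q \ge 0$. Now $P/(s+\epsilon) = 1_n + J_1/(s+\epsilon)$ lies in the positive cone of $1_n + M_n(\cl J)$, so by the very definition of $\vartheta_n(\cl J)$ one obtains $\|P\| \le (s+\epsilon)\vartheta_n(\cl J)$, and similarly $\|Q\| \le (s+\epsilon)\vartheta_n(\cl J)$. The crucial estimate is the classical fact that the off-diagonal entry $Y$ of a positive $2\times 2$ block matrix $\begin{pmatrix} P & Y \\ Y^* & Q \end{pmatrix}$ factors as $Y = P^{1/2}TQ^{1/2}$ for some contraction $T$, and hence
\[
\|X + K\| \;\le\; \sqrt{\|P\|\,\|Q\|} \;\le\; (s+\epsilon)\,\vartheta_n(\cl J).
\]

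Since $K \in M_n(\cl J)$, this bounds $\|\dot X\|_{\osp} \le (s+\epsilon)\,\vartheta_n(\cl J)$; letting $\epsilon \downarrow 0$ gives $\|\dot X\|_{\osp} \le \vartheta_n(\cl J)\,\|\dot X\|_{\osy}$, so $\delta_n(\cl S,\cl J) \le \vartheta_n(\cl J)$. Taking the supremum over all operator systems $\cl S$ containing $\cl J$ as a kernel yields $\delta_n(\cl J) \le \vartheta_n(\cl J)$, from which both inequalities of the theorem follow. The main obstacle is selecting the correct estimate on the off-diagonal block: the naive bound $\|Y\| \le \|P\| + \|Q\|$ from arithmetic-mean reasoning would lose a factor of $2$ and deliver only $\delta \le 2\vartheta$, so the geometric-mean bound $\|Y\| \le \sqrt{\|P\|\|Q\|}$ is exactly what is needed to obtain the clean inequality stated.
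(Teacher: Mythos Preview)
Your proof is correct and follows essentially the same route as the paper's: both arguments unravel the $2\times 2$ block description of $\|\dot X\|_{\osy}$, lift to a positive element of $M_{2n}(\cl S)$, and then bound the off-diagonal block $X+K$ by the norms of the diagonal blocks, which are controlled by $\vartheta_n(\cl J)$. The only cosmetic differences are that the paper normalizes to $\|\dot x\|_{\osy}=1$ and uses the bound $\|Y\|\le\max\{\|P\|,\|Q\|\}$ (equivalent here, since $\sqrt{\|P\|\,\|Q\|}\le\max\{\|P\|,\|Q\|\}$), and that the paper first treats $n=1$ and then deduces the matrix case from the identifications $\delta_n(\cl S,\cl J)=\delta_1(M_n(\cl S),M_n(\cl J))$ and $\vartheta_n(\cl J)=\vartheta(M_n(\cl J))$, whereas you argue directly at level $n$.
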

\begin{proof}
Let $x\in \cl S$ be such that $ \|\dot x\|_{\osy} = 1$. 
Then
$$\left(
\begin{matrix} 
  \dot 1_{\cl S} & \dot x\\ 
\dot x^* &    \dot 1_{\cl S}\\
\end{matrix}
\right)\in M_2(\cl S/\cl J)^+.$$
Thus,  for every $\epsilon >0,$
$$\left(
\begin{matrix} 
(1+\epsilon)\dot 1_{\cl S} & \dot x\\ 
\dot x^* &  (1+\epsilon)\dot 1_{\cl S}\\
\end{matrix}
\right)\in D_2(\cl S/\cl J)$$
and so there exists 
$\left(\smallmatrix
a & b\\ 
b^* & c
\endsmallmatrix
\right) \in M_2(\cl J)$ such that 
$$\left(
\begin{matrix} 
(1+\epsilon)1_{\cl S} + a &  x + b\\ 
x^* + b^* &  (1+\epsilon)1_{\cl S} + c\\
\end{matrix}
\right)\in M_2(\cl S)^+.$$
But then 
$$\|x + b\| \leq \max\{\|(1+ \epsilon)1_{\cl S} + a\|, \|(1+ \epsilon)1_{\cl S} + c\|\}$$
with $(1+\epsilon)1_{\cl S} + a, (1+ \epsilon)1_{\cl S} + c\in \cl S^+$.
Since $\epsilon$ was arbitrary, we have that $\|x+b\| \le \vartheta(\cl J)$
On the other hand,
$$\|x + b\| \geq \inf\{\|x + y\| : y\in \cl J\} = \|\dot x\|_{\osp}$$
and it follows that $\vartheta(\cl J) \geq \|\dot x\|_{\osp}.$
Thus,  $\delta(\cl S, \cl J) \le \vartheta(\cl J)$ for every $\cl S$ and so $\delta(\cl J) \le \vartheta(\cl J).$

Note that $M_n(\cl J)$ is a kernel in $M_n(\cl S)$ and $\delta_n(\cl S, \cl J) = \delta_1(M_n(\cl S), M_n(\cl J)).$ Also, $\vartheta(M_n(\cl J))
= \vartheta_n(\cl J).$ Hence, 
\[\delta_{cb}(\cl J) = \sup_n \{ \delta(M_n(\cl J)) \} \le \sup_n \{ \vartheta(M_n(\cl J)) \} = \vartheta_{cb}(\cl J).\] 
\end{proof}

\begin{cor}
For any $X\in M_n(\cl S)$,
$$||\dot X||_{osp}\leq \vartheta_n(\cl J) \cdot ||\dot X||_{osy}$$
\end{cor}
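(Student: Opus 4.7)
The plan is to observe that the corollary is essentially a reformulation of what was already proved inside Theorem \ref{th_te}, applied one matrix level up. First I would extract from the proof of Theorem \ref{th_te} the scalar statement in its homogeneous form: for any operator system $\cl T$ with kernel $\cl K$ and any $y\in\cl T$, one has $\|\dot y\|_{\osp}\le\vartheta(\cl K)\cdot\|\dot y\|_{\osy}$. This is exactly what the inequality $\delta(\cl T,\cl K)\le\vartheta(\cl K)$ asserts after clearing the supremum and using homogeneity (the case $\|\dot y\|_{\osy}=0$ being handled separately, but in that case $\dot y=0$ in the operator system quotient and hence $\|\dot y\|_{\osp}=0$ too, since the osp norm dominates the osy norm of $\dot y^*\dot y$ in a standard way, or more simply because $y\in\cl J$).

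Next I would apply this scalar inequality to the ampliated data $\cl T=M_n(\cl S)$ and $\cl K=M_n(\cl J)$. Two facts justify this substitution. First, $M_n(\cl J)$ is a kernel inside $M_n(\cl S)$: if $\cl J=\ker\phi$ for a UCP $\phi:\cl S\to\cl B(\cl H)$, then $M_n(\cl J)=\ker\phi^{(n)}$ for the ampliation $\phi^{(n)}:M_n(\cl S)\to M_n(\cl B(\cl H))$, which is again UCP. Second, as recorded in the proof of Theorem \ref{th_te}, $\vartheta(M_n(\cl J))=\vartheta_n(\cl J)$ directly from the definitions, since positivity in $M_m(M_n(\cl S))$ is the same as positivity in $M_{mn}(\cl S)$ and the unit of $M_n(\cl S)$ is $1_n$.

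Combining these, for every $X\in M_n(\cl S)$,
\[
\|\dot X\|_{\osp}\;\le\;\vartheta(M_n(\cl J))\cdot\|\dot X\|_{\osy}\;=\;\vartheta_n(\cl J)\cdot\|\dot X\|_{\osy},
\]
which is the desired inequality. There is no genuine obstacle here; the only point requiring mild care is verifying that the osp and osy quotient norms one gets by viewing $X$ as an element of $M_n(\cl S)$ and quotienting by $M_n(\cl J)$ agree with the $n$-th matrix norms of $\dot X\in M_n(\cl S/\cl J)$ in both operator-space and operator-system quotient structures, but this is the standard compatibility of quotients with matricial amplification used throughout \cite{kptt2010}.
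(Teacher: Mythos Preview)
Your proposal is correct and matches the paper's approach: the corollary is stated without proof because it is immediate from Theorem~\ref{th_te}, and your argument---reduce to the scalar case by passing to $M_n(\cl S)$ with kernel $M_n(\cl J)$ and use $\vartheta(M_n(\cl J))=\vartheta_n(\cl J)$---is exactly the amplification already carried out in the last paragraph of that theorem's proof. The only cosmetic point is that your handling of the degenerate case $\|\dot y\|_{\osy}=0$ is slightly tangled; the clean reason is simply that the operator system quotient norm is a genuine norm, so $\|\dot y\|_{\osy}=0$ forces $y\in\cl J$ and hence $\|\dot y\|_{\osp}=0$.
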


We now compute these parameters in one case.

\begin{cor} If $\cl J \subseteq M_n$ denotes the diagonal matrices of
  trace 0, then
\[ n = \delta(M_n, \cl J) = \delta(\cl J) = \vartheta(\cl J)=
\vartheta_{cb}(\cl J).\]
\end{cor}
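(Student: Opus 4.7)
The plan is to assemble the equalities from a chain of inequalities, most of which are already in place from the preceding results, and then separately bound the completely bounded version.

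First I would observe that Remark \ref{tr0} already supplies two of the four quantities: it establishes $n \leq \delta(M_n,\cl J)$ and $\vartheta(\cl J) = n$. Next, directly from the definition of the absolute distortion as a supremum over all operator systems containing $\cl J$ as a kernel, we have $\delta(M_n,\cl J) \leq \delta(\cl J)$. Finally, Theorem \ref{th_te} provides $\delta(\cl J) \leq \vartheta(\cl J)$. Chaining these together yields
\[
n \;\leq\; \delta(M_n,\cl J) \;\leq\; \delta(\cl J) \;\leq\; \vartheta(\cl J) \;=\; n,
\]
forcing equality throughout. So the first three equalities come for free.

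The only step that requires genuine work is $\vartheta_{cb}(\cl J) = n$. Since $\vartheta_1(\cl J) = n$ we already have $\vartheta_{cb}(\cl J) \geq n$, so I need to prove $\vartheta_k(\cl J) \leq n$ for every $k$. I would exploit the structure of $M_k(\cl J)$: an element $J \in M_k(\cl J)$ can be written uniquely as $J = \sum_{i=1}^n D_i \otimes E_{i,i}$ with $D_i \in M_k$, and the condition that each block of $J$ lies in $\cl J$ (i.e., is a diagonal trace-zero matrix in $M_n$) translates into $\sum_{i=1}^n D_i = 0$. Reinterpreting $M_k \otimes M_n$ with the roles of the tensor factors swapped, $1_k + J$ becomes the block-diagonal matrix $\mathrm{diag}(I_k + D_1, \ldots, I_k + D_n)$, whose norm is $\max_i \|I_k + D_i\|$ and whose positivity is equivalent to $D_i \geq -I_k$ for all $i$.

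Given positivity, we have $\|I_k + D_i\| = 1 + \lambda_{\max}(D_i)$. To bound $\lambda_{\max}(D_1)$, for instance, I would use the two constraints $\sum_{i=1}^n D_i = 0$ and $D_i \geq -I_k$ for $i \geq 2$: then $D_1 = -\sum_{i\geq 2} D_i \leq (n-1)I_k$, so $\lambda_{\max}(D_1) \leq n-1$ and $\|I_k + D_1\| \leq n$. The same bound applies to every index, giving $\|1_k + J\| \leq n$. Combined with the reverse inequality already noted, this yields $\vartheta_{cb}(\cl J) = n$ and completes the proof. This block-diagonal reduction is the only real content of the argument; the rest is bookkeeping of previously established inequalities.
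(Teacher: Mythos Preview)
Your proof is correct, and the chain of inequalities establishing $n = \delta(M_n,\cl J) = \delta(\cl J) = \vartheta(\cl J)$ is exactly the paper's argument.

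For $\vartheta_{cb}(\cl J) = n$, both you and the paper exploit the identification $M_k(\cl D_n) \cong \bigoplus_{i=1}^n M_k$, but the executions differ. The paper first diagonalizes the (self-adjoint) element $(J_{k,l})$ inside $M_p(\cl D_n)$ to a diagonal $\mathrm{diag}(J_1,\ldots,J_p)$ with $J_i \in \cl D_n$, then asserts each $J_i$ still has trace zero so that the bound $\|I_n + J_i\| \le \vartheta(\cl J)$ from the scalar case applies. You skip the diagonalization and work directly with the blocks $D_1,\ldots,D_n \in M_k$: the condition that every entry of $J$ lies in $\cl J$ becomes $\sum_i D_i = 0$, and together with $D_i \ge -I_k$ this immediately yields $D_j = -\sum_{i\ne j} D_i \le (n-1)I_k$, hence $\|I_k + D_j\| \le n$. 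Your route is a little more direct and avoids having to justify that the diagonalized entries $J_i$ inherit the trace-zero property after conjugation by a unitary in $M_p(\cl D_n)$, a point the paper passes over quickly.
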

\begin{proof}
By Remark~\ref{tr0} and the above result, we have that
\[n \le \delta(M_n, \cl J) \le \delta(\cl J) \le \vartheta(\cl J)
=n.\]
So all that remains is to show that $\vartheta_{cb}(\cl J) = n.$  

If
we let $\cl D_n\subseteq M_n$ denote the diagonal matrices, then for
each p, $M_p(\cl D_n)$ can be thought of as the C*-algebra of
functions from the set $\{1,...,n \}$ into $M_p.$ From this it can be seen
that every $(J_{k,l})
\in M_p(\cl D_n)$ is unitarily via an element in this algebra to a
diagonal element $diag(J_1,...,J_p)$ of this algebra.  Moreover, since
each $J_i$ is a linear combination of the matrices $J_{k,l}$ it
follows that if  $tr(J_{k,l})=0$ for all $k,l,$ then $tr(J_i) =0$ for
all $i.$  Since unitaries preserve norms, we see that
if $J_{k,l} \in \cl J$ and $diag(I_n,..., I_n) +(J+{k,l}) \ge 0,$ then
$I_n +J_i \ge 0.$ Also, $\| diag(I_n,...,I_n) +(J_{k,l})\| = \max \{
\|I_n +J_1\|,..., \|I_n +J_p\|\} \le \vartheta(\cl J).$

This shows that $\vartheta_{cb}(\cl J) = \vartheta(\cl J)$ and the
result follows.
\end{proof}

Note  that $M_k$ is a Hilbert space with respect to the inner product 
$(a,b) = \tr(ab^*)$, $a,b\in M_k$. Thus, given any subspace $\cl S\subseteq M_n$, 
one may form the orthogonal complement $\cl S^{\perp}$ of $\cl S$. 
Given a graph $G$ on $k$ vertices, 
$$\cl S_G^{\perp} = {\rm span}\{E_{i,j} : (i,j)\in G^c\}.$$
Results in \cite{dsw} imply that $\cl S_G^{\perp}$ is a kernel in our sense. Below is a direct proof in the language of operator systems,
that also characterizes the quotient as the operator system dual of $\cl S_G.$

We recall that given a finite dimensional operator system, $\cl S$, the dual space $\cl S^d$ is also an operator system. The matrix
ordering on the dual space is defined by $(f_{i,j}) \in M_n(\cl S^d)^+$ if and only if the map $F: \cl S \to M_n$ given by $F(x) = (f_{i,j}(x))$ is completely
positive.

\begin{proposition}\label{p_gker}
Let $G$ be a graph on $k$ vertices. Then $\cl S_G^{\perp}$ is a kernel in $M_k$ and the quotient $M_k/\cl S_G^{\perp}$ is completely
order isomorphic to the operator system dual $\cl S_G^d.$ 
\end{proposition}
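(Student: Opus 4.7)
The plan is to build an explicit unital completely positive map $\Psi : M_k \to \cl S_G^d$ whose kernel is $\cl S_G^{\perp}$ and which factors as a complete order isomorphism through the quotient. Since the statement of the proposition presumes a unit on $\cl S_G^d$, my first task is to fix one: I take $\tau(a) := \tr(a)|_{\cl S_G}$, which is a faithful state on $\cl S_G$ because the trace is faithful on $M_k$ and $\cl S_G$ contains all $E_{i,i}$. By the Choi--Effros duality for finite dimensional operator systems any faithful state on $\cl S_G$ is an Archimedean matrix order unit for the dual cones described in the excerpt, so $(\cl S_G^d, \tau)$ is a legitimate operator system.

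Next, define $\Psi : M_k \to \cl S_G^d$ by $\Psi(X)(a) = \tr(Xa)$ for $a \in \cl S_G$. Nondegeneracy of the Hilbert--Schmidt pairing and the fact that $\cl S_G$ is $*$-closed give $\ker \Psi = \cl S_G^{\perp}$, and $\Psi(I_k) = \tau$ makes $\Psi$ unital. For complete positivity, if $(X_{i,j}) \in M_n(M_k)^+$ then the Choi correspondence identifies $(X_{i,j})$ with a completely positive map $T: M_k \to M_n$ whose components are the trace pairings $a \mapsto \tr(X_{i,j} a)$ (up to the standard transpose convention, which is absorbed by the fact that $\cl S_G$ is closed under transposition). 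The restriction $T|_{\cl S_G}$ is still completely positive, and by the definition of $M_n(\cl S_G^d)^+$ recalled in the excerpt this is precisely the assertion that $(\Psi(X_{i,j})) \in M_n(\cl S_G^d)^+$. Hence $\Psi$ is a surjective UCP map with $\ker \Psi = \cl S_G^{\perp}$, which already shows $\cl S_G^{\perp}$ is a kernel and yields an induced UCP bijection $\overline{\Psi} : M_k/\cl S_G^{\perp} \to \cl S_G^d$.

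To upgrade $\overline{\Psi}$ to a complete order isomorphism, I need $\overline{\Psi}^{-1}$ to be completely positive. Suppose $(\Psi(X_{i,j})) \in M_n(\cl S_G^d)^+$, i.e.\ the map $a \mapsto (\tr(X_{i,j} a))$ from $\cl S_G$ to $M_n$ is completely positive. By Arveson's extension theorem it extends to a completely positive map $\widehat{T} : M_k \to M_n$, and the Choi correspondence produces $(Y_{i,j}) \in M_n(M_k)^+$ with $\tr(Y_{i,j} a) = \tr(X_{i,j} a)$ for every $a \in \cl S_G$. Thus $Y_{i,j} - X_{i,j} \in \cl S_G^{\perp}$, so $Y - X \in M_n(\cl S_G^{\perp})$ and $X + (Y - X) = Y \ge 0$ witnesses that $(\dot X_{i,j}) \in \cl D_n(M_k/\cl S_G^{\perp})$. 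Because $\cl S_G^{\perp}$ is finite dimensional, the excerpt's remark tells us the Archimedeanisation is trivial, so $(\dot X_{i,j}) \in M_n(M_k/\cl S_G^{\perp})^+$, as needed.

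The main bookkeeping obstacle is keeping the Choi-matrix conventions aligned with the trace pairing that defines $\Psi$: one must be careful that the same pairing $(X, a) \mapsto \tr(Xa)$ realizes both the bijection $M_n(M_k)^+ \leftrightarrow \mathrm{CP}(M_k, M_n)$ and the defining pairing for $M_n(\cl S_G^d)^+$. The closure of $\cl S_G$ under transposition makes any transpose that appears harmless, after which the whole argument becomes the standard finite dimensional duality between inclusions and quotients of operator systems.
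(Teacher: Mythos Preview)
Your overall strategy is sound and, once corrected, is essentially an unpacking of the paper's argument. However, there is a genuine error in your choice of pairing: the map $\Psi(X)(a) = \tr(Xa)$ is \emph{not} completely positive from $M_k$ to $\cl S_G^d$ in general, and your claim that the transpose discrepancy is ``absorbed by the fact that $\cl S_G$ is closed under transposition'' is incorrect.

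To see the failure, take $G = K_k$, so that $\cl S_G = M_k$ and $\cl S_G^\perp = 0$. With $n = k$ consider $(X_{i,j}) = (E_{i,j})_{i,j=1}^k \in M_k(M_k)^+$ (the Choi matrix of the identity map). Then $a \mapsto (\tr(E_{i,j}a))_{i,j} = (a_{j,i})_{i,j} = a^T$ is the transpose map on $M_k$, which is not completely positive for $k \ge 2$. Hence $(\Psi(E_{i,j}))_{i,j} \notin M_k(\cl S_G^d)^+$ even though $(E_{i,j}) \ge 0$, so $\Psi$ is not $k$-positive. Closure of $\cl S_G$ under transposition does not help: what you would actually need is that $a \mapsto a^T$ be a \emph{complete} order automorphism of $\cl S_G$, and for $G = K_k$ that is precisely the (false) statement that transpose is completely positive on $M_k$.

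The fix is to use the pairing $\Psi(X)(a) = \tr(X^T a) = \sum_{i,j} x_{i,j} a_{i,j}$. A direct Choi-matrix computation shows that $a \mapsto (\tr(X_{i,j}^T a))$ is completely positive from $M_k$ to $M_n$ if and only if $(X_{i,j}) \ge 0$; restricting the functionals to $\cl S_G$ then gives the UCP surjection you want, and your Arveson-extension argument for the inverse goes through with the same correction. With this change your proof coincides with the paper's: the paper invokes the self-duality $M_k \cong M_k^d$ via $E_{i,j} \mapsto \delta_{i,j}$ (which is exactly the $\tr(X^T a)$ pairing) together with the abstract fact that the dual of a complete order embedding is a complete quotient map, whereas you verify those two ingredients by hand via Choi's theorem and Arveson's extension theorem. (A minor aside: $\tau = \tr|_{\cl S_G}$ satisfies $\tau(I_k) = k$, so it is a faithful positive functional rather than a state; this does not affect its role as an Archimedean order unit.)
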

\begin{proof} It is proven in \cite[Thm. 6.2]{ptt} that $M_k$ is self-dual as an operator system via the map $\rho: M_k \to M_k^d$ that sends the matrix unit $E_{i,j} \in M_k$ to
the dual functional $\delta_{i,j} \in M_k^d.$
Let $\iota : \cl S_G\to M_k$ be the inclusion map; it is clearly a complete order embedding. 
Thus its dual 
$\iota^d : M_k^d \to \cl S_G^d$ is a complete quotient map by \cite[Prop. 1.8]{fp}. Let $\cl J$ be its kernel. 
A functional $f = \sum_{i,j}\lambda_{i,j}\delta_{i,j}$ is in the kernel of $\iota^d$
if and only if 
$ f(E_{i,j}) = 0$ whenever $(i,j)\in G$ or $i = j$. Thus, $f$ is in the kernel of $\iota^d$ if and only if
$\lambda_{i,j} = 0$ whenever $(i,j)\in G$ or $i = j$. Thus,
$$\ker \iota^d = {\rm span}\{\delta_{i,j} : (i,j)\in G^c\}.$$
Thus, 
$$\rho^{-1}(\ker\iota^d) = {\rm span}\{E_{i,j} : (i,j)\in G^c\} = \cl S_G^{\perp}.$$
It follows that $\cl S_G^d \equiv M_k^d/\ker \iota^d \equiv M_k/\cl S_G^{\perp}.$
\end{proof}

\begin{cor} Let $G$ be a graph on $k$ vertices, let $x= \sum_{i,j=1}^k x_{i,j} E_{i,j} \in M_k$ and let $f= \sum_{i,j=1}^k x_{i,j} \delta_{i,j}: \cl S_G \to \bb C$ denote the corresponding functional.  Then 
\[ \|f\|= \| \dot x \|_{\osy} \ge \delta(M_k, \cl S_G^{\perp})^{-1} \| \dot x \|_{\osp} \ge \vartheta(\cl S_G^{\perp})^{-1} \|\dot x\|_{\osp}.\]
\end{cor}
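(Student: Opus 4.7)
The plan is to decompose the chain into three links and dispatch each by invoking the appropriate earlier result; the corollary is essentially a formal consequence of Proposition \ref{p_gker} and Theorem \ref{th_te}, once a notational point is fixed.

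For the initial equality $\|f\| = \|\dot x\|_{\osy}$, I would lean on Proposition \ref{p_gker}. That result supplies a unital complete order isomorphism $\Phi: M_k/\cl S_G^{\perp} \to \cl S_G^d$, realized as the descent of the composition $\iota^d \circ \rho : M_k \to \cl S_G^d$; chasing definitions shows $\Phi(\dot x) = f$. Since a unital complete order isomorphism of operator systems is automatically a complete isometry, the operator system quotient norm $\|\dot x\|_{\osy}$ agrees with the operator system norm of $f$ as an element of $\cl S_G^d$, which is what $\|f\|$ denotes in this setting.

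For the middle inequality $\|\dot x\|_{\osy} \ge \delta(M_k, \cl S_G^{\perp})^{-1} \|\dot x\|_{\osp}$, this is a direct rearrangement of the defining supremum of the relative distortion: since $\delta(M_k, \cl S_G^{\perp}) = \sup_{y \in M_k} \|\dot y\|_{\osp}/\|\dot y\|_{\osy}$, specializing to $y = x$ yields $\|\dot x\|_{\osp} \le \delta(M_k, \cl S_G^{\perp})\, \|\dot x\|_{\osy}$, which is the claim. For the last inequality it suffices to prove $\delta(M_k, \cl S_G^{\perp}) \le \vartheta(\cl S_G^{\perp})$, and this I would obtain by chaining the trivial bound $\delta(M_k, \cl S_G^{\perp}) \le \delta(\cl S_G^{\perp})$ (immediate from the definition of the absolute distortion as a supremum over all ambient operator systems in which $\cl S_G^{\perp}$ sits as a kernel) with Theorem \ref{th_te}, which provides $\delta(\cl S_G^{\perp}) \le \vartheta(\cl S_G^{\perp})$.

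The only point requiring any care is the first step, and specifically the reading of $\|f\|$. One must interpret $\|f\|$ as the operator system norm on $\cl S_G^d$ coming from the isomorphism with $M_k/\cl S_G^{\perp}$, rather than as the Banach dual norm of $f$ regarded as a functional on $\cl S_G$; those two quantities are genuinely different in general (for instance, when $G$ is empty on two vertices and $x = I_2$, one finds $\|\dot x\|_{\osy} = 1$ while the Banach dual norm of the corresponding functional $f = \tr|_{\cl D_2}$ equals $2$). With that convention in hand, no further calculation is needed and the corollary falls out directly from Proposition \ref{p_gker} and Theorem \ref{th_te}.
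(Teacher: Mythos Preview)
Your proposal is correct and matches the paper's intended derivation: the corollary is stated without proof, as a direct consequence of Proposition~\ref{p_gker} (for the equality $\|f\|=\|\dot x\|_{\osy}$) together with the definition of $\delta(M_k,\cl S_G^{\perp})$ and Theorem~\ref{th_te} (for the two inequalities), exactly as you lay it out. Your explicit warning that $\|f\|$ must be read as the operator-system norm on $\cl S_G^d$ (with order unit the trace, inherited via $\rho$) rather than the Banach dual norm is a genuine clarification of a point the paper leaves implicit, and your $G=\varnothing$ on two vertices example correctly shows the two norms differ.
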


We now see that 
\[\vartheta(\cl S_G^{\perp}) = \sup \{ \| I+K \|: I+K \ge 0, K \in \cl S_G^{\perp} \} = \vartheta(G)\]
by  \cite{dsw}. Similarly, 
$\vartheta_{cb}(\cl S_G^{\perp})= \tilde{\vartheta}(\cl S_G)$ is the \lq\lq complete'' Lov\'{a}sz number of $G$ 
introduced in \cite{dsw}.

In \cite{dsw} it is shown that for graphs, 
\[ \vartheta_{cb}(\cl S_G^{\perp}) = \vartheta(\cl S_G^{\perp}).\]
It is useful to recall their argument.

First, note that $M_p(\cl S_G) = \cl S_{G \boxtimes K_p}$, where $K_p$ denotes the complete graph on $p$ vertices.
Also notice that 
$$\cl S_{G \boxtimes K_p}^{\perp} = M_p(\cl S_G^{\perp})$$
Hence,
\[\vartheta_{cb}(\cl S_G^{\perp})= \sup_p \vartheta(\cl S_{G \boxtimes K_p}^{\perp}) = \sup_p \vartheta(G \boxtimes K_p) = \sup_p \vartheta(G)\vartheta(K_p) = \vartheta(G),\]
using Lov\'{a}sz famous result that $\vartheta$ is multiplicative for strong products of graphs
and the fact that $\vartheta(K_p) =1.$

We now get a lower bound on the distortion in terms of a graph
theoretic parameter. 
%This bound can be derived from the work of
%Haemers\cite{Ha}, but is perhaps easiest done directly.

\begin{theorem}
Let $G$ be a graph on $k$ vertices and let $K_{p,q}$ be an induced complete bipartite subgraph of $G$. Then 
\[ \sqrt{pq} \le \delta(M_k, \cl S_G^{\perp}).\]
\end{theorem}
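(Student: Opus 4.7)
The plan is to exhibit a single self-adjoint $X \in M_k$ with $\|\dot X\|_{\osp} \geq \sqrt{pq}$ and $\|\dot X\|_{\osy} \leq 1$; dividing these witnesses the desired lower bound on $\delta(M_k, \cl S_G^{\perp})$. Let $A, B$ be the two sides of the induced $K_{p,q}$, with $|A|=p$ and $|B|=q$. By hypothesis every pair in $A \times B$ is an edge of $G$ and no edges lie inside $A$ or inside $B$, so all the off-diagonal positions in $A \times A$ and $B \times B$ lie in $\overline{G}$. Set
\[ X = \sum_{i \in A, j \in B}(E_{i,j}+E_{j,i}) \in \cl S_G \subseteq M_k. \]

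For the operator space lower bound, observe that any $Y \in \cl S_G^{\perp}$ is supported on $\overline{G}$, which is disjoint from $A \times B$, so $P_A Y P_B = 0$. Hence $P_A(X+Y)P_B = P_A X P_B$ is exactly the $p \times q$ all-ones block, whose operator norm equals $\sqrt{pq}$. Since the operator norm dominates that of any rectangular submatrix, $\|X+Y\| \geq \sqrt{pq}$ for every $Y \in \cl S_G^{\perp}$, and therefore $\|\dot X\|_{\osp} \geq \sqrt{pq}$.

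For the upper bound on $\|\dot X\|_{\osy}$, since $X$ is self-adjoint this amounts to producing self-adjoint $J_\pm \in \cl S_G^{\perp}$ with $I \pm X + J_\pm \geq 0$ in $M_k$ (the Archimedeanisation is unnecessary by finite-dimensionality). I would take
\[ J_+ \;=\; J_- \;=\; \sum_{\substack{i,i' \in A \\ i \neq i'}} E_{i,i'} \;+\; \sum_{\substack{j,j' \in B \\ j \neq j'}} E_{j,j'}. \]
The induced-ness of $K_{p,q}$ places every entry of $J_\pm$ in $\overline{G}$, so $J_\pm \in \cl S_G^{\perp}$. A direct block calculation shows that $I - X + J_-$ restricted to the coordinates of $A \cup B$ equals $vv^T$ with $v = \mathbf{1}_A \oplus (-\mathbf{1}_B)$ and equals the identity on the remaining coordinates; symmetrically, $I + X + J_+$ equals $ww^T$ with $w = \mathbf{1}_A \oplus \mathbf{1}_B$ on $A \cup B$ and the identity elsewhere. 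Both matrices are manifestly positive semidefinite, so $\|\dot X\|_{\osy} \leq 1$.

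Combining the two bounds, $\delta(M_k, \cl S_G^{\perp}) \geq \|\dot X\|_{\osp}/\|\dot X\|_{\osy} \geq \sqrt{pq}$. The only real obstacle in the argument is discovering the correcting elements $J_\pm$; the crucial use of the hypothesis is that induced-ness of $K_{p,q}$ frees the entire $A \times A$ and $B \times B$ off-diagonal blocks to lie in $\cl S_G^{\perp}$, allowing them to be filled with the all-ones pattern needed to complete the bipartite cross into a rank-one PSD matrix.
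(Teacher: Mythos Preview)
Your proof is correct and is essentially the same as the paper's. The paper chooses the non-self-adjoint $X=\sum_{i\in A,\,j\in B}E_{i,j}$ and verifies $\|\dot X\|_{\osy}\le 1$ via the $2\times 2$ block criterion $\left(\begin{smallmatrix} I+K & X\\ X^* & I+R\end{smallmatrix}\right)\ge 0$, with $K,R$ the off-diagonal all-ones blocks on $A\times A$ and $B\times B$; your self-adjoint version with the order-interval criterion $I\pm X+J\ge 0$ and $J=K+R$ is just a repackaging of the same rank-one positivity, and your compression argument $P_A(X+Y)P_B$ for the $\osp$ lower bound makes explicit what the paper states without justification.
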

\begin{proof} Let the vertices for the subgraph be numbered $1,...,p$ for the first set and $p+1,...,p+q$ for the remainder. Let $X=(x_{i,j})$ be the matrix with
$x_{i,j} =1$ for $1\le i \le p$ and $p+1 \le j \le p+q$ and 0 otherwise.
Let $K=(k_{i,j})$ with $k_{i,j} =1$ for $1 \le i,j \le p$ and $i \ne j$ and 0 otherwise.  Let $R= (r_{i,j})$ be the matrix such that $r_{i,j} = 1$ for $p+1 \le i,j \le p+q$, $i \ne j$ and 0 otherwise.  Then $K,R \in \cl S_G^{\perp}$ and
\[ \begin{pmatrix} I+K & X\\X^* & I+R \end{pmatrix} \]
is positive.  Hence,  $\|X\|_{osy} \le 1.$  However,
\[ \|\dot X\|_{osp} = dist(X, \cl S_G^{\perp})= \|X\| = \sqrt{pq}.\]

Hence,  $\frac{||X+\cl \cl S_G^{\perp}||_{osp}}{||X+\cl S_G^{\perp}||_{osy}}\ge \sqrt{pq}.$\\
\end{proof}

\begin{remark} Haemers \cite{haemers} introduces the parameter
$\Phi(G) = \max \{ \sqrt{pq}: K_{p,q} \subseteq G \},$ i.e., the
maximum over all complete bipartite subgraphs of $G$, that are not
necessarily induced subgraphs. He proves that $\Phi(G) \le
\vartheta^{\prime}(G),$ which is
another variant of the Lovasz theta function. We have been unable to find any relationship
between his parameters and ours.
\end{remark}

If we let $\cl S = M_n$ and let $\cl T = \{ \begin{pmatrix} A & 0 \\ 0
  & A \end{pmatrix} : A \in M_n \}$ then these operator systems are
unitally, completely order isomorphic, but $\vartheta(\cl S^{\perp})
=1,$ while $\vartheta(\cl T^{\perp}) = 2.$
However, $\delta(M_n, \cl S^{\perp}) = \delta(M_{2n}, \cl T^{\perp})
=1.$
This motivates the following problems.

\begin{problem} If $\cl S \subseteq M_n$ and $\cl T \subseteq M_n$ are
  unitally completely order isomorphic, then is $\vartheta(\cl
  S^{\perp}) = \vartheta(\cl T^{\perp})$ ?
\end{problem}
\begin{problem} If $\cl S \subseteq M_n$ and $\cl T \subseteq M_m$ are
  unitally completely order isomorphic, then is $\delta(M_n, \cl
  S^{\perp}) = \delta(M_m, \cl T^{\perp})$ ?
\end{problem}
\begin{problem} Is $\delta_{cb}( \cl J) = \vartheta_{cb}( \cl J)$ ?
\end{problem}

%%%%%%%%%%%%%%%%%%%%%%%%%
\section{Multiplicativity of Graph Parameters}
One of the great strengths of the Lov\'{a}sz theta function is the
fact that it is multiplicative for strong graph product.  Recall that,
$$\vartheta(G)=\vartheta(\cl S_G^{\perp}) = \sup\{\|I + K\| : K\in
\cl S_G^{\perp}, I+K\in M_n^+\}.$$ 
%Lov\'{a}sz also proved several formulas
%for this quantity, including
%\[ \vartheta(G) = \min \{ \lambda_1(A_G +K): K=K^* \in S_G^{\perp} \}.\]

In this section we wish to examine multiplicativity of some of the
other parameters. We have been unable to determine if our general
theta function is multiplicative for tensor products of kernels or if any of the various
distortions are multiplicative.

Instead we focus more closely on the graph theory case where we get
some multiplicativity results using general facts about tensor products of operator spaces and operator systems.
Let us examine more closely the case when $\cl S=M_n$ and $\cl J=\cl S_G^{\perp}$. Throughout this section let $ X\in M_n$ and $Y\in M_m$.
 This means we can define the following two 
families of parameters,
 $$\sigma(G,X):=||X+\cl S_G^{\perp}||_{osy}$$  
 $$d_{\infty}(G,X):=||X+\cl S_G^{\perp}||_{osp}.$$
We will prove that given two graphs $G$ and $H$:
\[ \sigma(G \boxtimes H, X \otimes Y) = \sigma(G,X) \sigma(H,Y),\]
and \[ d_{\infty}(G \boxtimes H, X \otimes Y) = d_{\infty}(G, X)
d_{\infty}(H, Y),\]
for any matrices $X$ and $Y.$ 

In parallel with Lov\'{a}sz's
work, of special interest are the cases when these matrices are $I, A_G,$
and $R_G,$ which are all real symmetric matrices. 
%In particular,
%Haemers\cite{Ha} studies, what in our notation is $d_{\infty}(G, A_G),$ as a parameter of a
%graph. 
Finally, for real matrices we give formulas for these quotient norms in
terms of SDP's which are then easy to implement and find numerically.

\begin{remark}
Our results can be extended to $||(X_{i,j}+\cl S_G^{\perp})||$, in either
the operator space or operator system case, by using the graph $G
\boxtimes K_m.$
\end{remark}
%\subsection{Operator Space Quotient Norm}
%$ \\ $
%In this section we will examine in detail the operator space quotient norm in $M_n/S_G^{\perp}$.  Recall that, 
%$$||X+S_G^{\perp}||_{osp}=\inf\{||X+K||: K\in S_G^{\perp}. \}$$

Before tackling our next result we need the following elementary lemma.

\begin{lemma}\label{perp} Let $G$ be a graph on $n$ vertices and let
  $H$ be a graph on $m$ vertices. Then
$$\cl S_G^{\perp} \otimes M_m + M_n \otimes \cl S_H^{\perp}=\cl S_{G \boxtimes H}^{\perp}$$
\end{lemma}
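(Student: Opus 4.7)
The plan is to verify the identity directly on the natural basis of matrix units, using the fact that both sides are spans of certain subsets of $\{E_{i,k}\otimes E_{j,l} : 1\le i,k\le n,\ 1\le j,l\le m\}$, which is an orthogonal basis of $M_n\otimes M_m=M_{nm}$ (under the Hilbert-Schmidt inner product). Since all four spaces appearing in the equation are spanned by matrix units $E_{i,k}\otimes E_{j,l}$ (or equivalently the matrix units $E_{(i,j),(k,l)}$ of $M_{nm}$), it suffices to match up the index sets of these spanning matrix units.

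The first step is to unwind the definitions:
\[
\cl S_G^\perp = \Span\{E_{i,k}:(i,k)\notin\widetilde G\},\qquad
\cl S_H^\perp = \Span\{E_{j,l}:(j,l)\notin\widetilde H\}.
\]
Hence $\cl S_G^\perp\otimes M_m$ is spanned by those $E_{i,k}\otimes E_{j,l}$ with $(i,k)\notin\widetilde G$ (with $(j,l)$ arbitrary), and symmetrically for $M_n\otimes\cl S_H^\perp$. Their sum is therefore
\[
\Span\bigl\{E_{i,k}\otimes E_{j,l}: (i,k)\notin\widetilde G\ \text{or}\ (j,l)\notin\widetilde H\bigr\}.
\]

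The second step is the key combinatorial observation about the strong product: for any $(i,j),(k,l)$,
\[
\bigl((i,j),(k,l)\bigr)\in\widetilde{G\boxtimes H}\ \Longleftrightarrow\ (i,k)\in\widetilde G\ \text{and}\ (j,l)\in\widetilde H.
\]
This follows by splitting cases on whether $i=k$ and whether $j=l$, then matching against the four-clause definition of $G\boxtimes H$ (plus the diagonal added in the tilde). For instance, if $i=k$ and $j=l$, both sides hold by the diagonals; if $i\ne k$ and $j=l$, both sides reduce to $(i,k)\in G$; and similarly in the remaining two cases. Taking complements yields
\[
\cl S_{G\boxtimes H}^\perp=\Span\bigl\{E_{i,k}\otimes E_{j,l}:(i,k)\notin\widetilde G\ \text{or}\ (j,l)\notin\widetilde H\bigr\},
\]
which is exactly the span computed in the first step. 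That finishes the proof.

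There is no serious obstacle here; the only thing to be slightly careful about is not confusing the two tensor factor orderings when rewriting $E_{(i,j),(k,l)}$ as $E_{i,k}\otimes E_{j,l}$, and keeping track that $\widetilde G$ includes the diagonal while $G$ does not, so that the biconditional above has to be checked in the cases where $i=k$ or $j=l$ (the "diagonal" cases) as well as the off-diagonal ones. Once the biconditional is verified, the conclusion is immediate from the matrix-unit description of both sides.
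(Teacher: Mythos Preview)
Your proof is correct. It differs only slightly from the paper's argument: the paper shows the inclusion $\cl S_G^\perp\otimes M_m + M_n\otimes\cl S_H^\perp \subseteq (\cl S_G\otimes\cl S_H)^\perp$ by a short inner-product computation and then appeals to a dimension count for equality, relying on the identity $\cl S_{G\boxtimes H}=\cl S_G\otimes\cl S_H$ stated earlier in the paper. You instead identify both sides explicitly as spans of the same set of matrix units, which amounts to proving (rather than quoting) the biconditional $((i,j),(k,l))\in\widetilde{G\boxtimes H}\iff (i,k)\in\widetilde G\text{ and }(j,l)\in\widetilde H$. Your route is slightly more self-contained; the paper's is marginally quicker once that identity is in hand. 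Either way, the content is the same combinatorial fact about the strong product.
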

\begin{proof}
Let $X\otimes Y\in M_n\otimes \cl S_H^{\perp}$ and $N\otimes M\in \cl S_G\otimes \cl S_H$.
Notice that,
$$\inner{X\otimes Y}{N\otimes M}=\inner{X}{N}\inner{Y}{M}=0$$
This implies that,
$$M_n\otimes \cl S_H^{\perp}\perp \cl S_G\otimes \cl S_H$$
Similarly, 
$\cl S_G\otimes M_m\perp \cl S_G\otimes \cl S_H$.
Hence 
$$\cl S_G\otimes M_m+M_n\otimes \cl S_H^{\perp}\subseteq (\cl S_G\otimes
\cl S_H)^{\perp}.$$

Equality holds since they have the same dimensions. 

%\begin{remark}
%\begin{gather*}
% N\otimes M\in S_{G \boxtimes H}^{\perp}  \iff 0=\inner{P_{G\boxtimes H}(X\otimes Y)}{ N\otimes M}=\inner{R_G\circ X}{N}\cdot\inner{R_H\circ Y}{M} \\
%\iff N\otimes M \in S_G^{\perp} \otimes M_m + M_n \otimes S_H^{\perp}
%\end{gather*}
%Now extending linearly gets us our desired result.
%\end{remark}
\end{proof}

\begin{thm}\label{ospmult} 
 Let $G$ be a graph on $n$ vertices with $X \in M_n$ and let
  $H$ be a graph on $m$ vertices with $Y \in M_m$. Then
 $$||X\otimes Y+\cl S_{G\boxtimes H}^{\perp}||_{osp}=
 ||X+\cl S_G^{\perp}||_{osp}\cdot ||Y+\cl S_H^{\perp}||_{osp},$$ that is,
 $d_{\infty}(G\boxtimes H, X \otimes Y) = d_{\infty}(G,X) \cdot
 d_{\infty}(H, Y).$
\end{thm}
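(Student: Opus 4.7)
The plan is to prove the two inequalities separately. The upper bound follows by constructing an explicit approximant using Lemma~\ref{perp}; the lower bound uses a duality formula for the operator-space quotient norm combined with the multiplicativity of the trace norm on tensor products.

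For the $\leq$ direction, I would fix $\epsilon > 0$ and pick $J_1 \in \cl S_G^{\perp}$ and $J_2 \in \cl S_H^{\perp}$ with $\|X - J_1\| < d_\infty(G,X) + \epsilon$ and $\|Y - J_2\| < d_\infty(H,Y) + \epsilon$. The expansion
$$X \otimes Y - (X - J_1)\otimes(Y - J_2) = X \otimes J_2 + J_1 \otimes Y - J_1 \otimes J_2$$
exhibits $X\otimes Y$ as $(X - J_1)\otimes(Y - J_2)$ plus an element of $M_n \otimes \cl S_H^{\perp} + \cl S_G^{\perp} \otimes M_m$, which equals $\cl S_{G\boxtimes H}^{\perp}$ by Lemma~\ref{perp}. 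Taking operator norms and sending $\epsilon \to 0$ yields $d_\infty(G \boxtimes H, X\otimes Y) \leq d_\infty(G,X)\cdot d_\infty(H,Y)$.

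For the $\geq$ direction, I would first record that at the scalar level the operator-space quotient norm coincides with the Banach-space quotient norm, so Hahn-Banach applied through the trace pairing gives
$$\|X + \cl S_G^{\perp}\|_{\osp} = \sup\{|\tr(XZ)| : Z \in \cl S_G,\ \|Z\|_1 \leq 1\},$$
where self-adjointness of $\cl S_G$ is used to identify the annihilator of $\cl S_G^{\perp}$ inside $M_n \cong M_n^*$ with $\cl S_G$ itself. Choosing near-optimal $Z \in \cl S_G$ and $W \in \cl S_H$ of unit trace norm, the singular values of $Z \otimes W$ are products of those of $Z$ and $W$, so $\|Z \otimes W\|_1 \leq 1$. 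Since $Z \otimes W \in \cl S_G \otimes \cl S_H = \cl S_{G \boxtimes H}$ by the preliminaries, and $\tr((X\otimes Y)(Z\otimes W)) = \tr(XZ)\tr(YW)$, the same duality formula applied in $M_{nm}/\cl S_{G\boxtimes H}^{\perp}$ delivers the matching lower bound.

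I do not expect any serious obstacles: the duality identification is a routine Hahn-Banach argument, and once it is in place each inequality collapses to a short computation. The only point requiring mild care is the verification that the annihilator of $\cl S_G^{\perp}$ under the trace pairing is $\cl S_G$, which reduces to the inclusion $\cl S_G \subseteq (\cl S_G^{\perp})^{\circ}$ (immediate from self-adjointness of $\cl S_G$) plus a dimension count.
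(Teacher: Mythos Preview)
Your proof is correct. The $\leq$ direction is identical to the paper's argument: expand $(X+K)\otimes(Y+L)$ and invoke Lemma~\ref{perp}. The $\geq$ direction, however, is genuinely different. The paper obtains the lower bound via operator-space machinery: the quotient maps $Q_1:M_n\to M_n/\cl S_G^{\perp}$ and $Q_2:M_m\to M_m/\cl S_H^{\perp}$ are completely contractive, so $Q_1\otimes Q_2$ is contractive on $M_n\otimes_{\min}M_m=M_{nm}$, and then the cross-norm property of $\otimes_{\min}$ gives $\|Q_1(X)\otimes Q_2(Y)\|=\|Q_1(X)\|\,\|Q_2(Y)\|$. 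Your route---Hahn--Banach through the trace pairing, identifying the annihilator of $\cl S_G^{\perp}$ with $\cl S_G$, and exploiting $\|Z\otimes W\|_1=\|Z\|_1\|W\|_1$ together with $\cl S_G\otimes\cl S_H=\cl S_{G\boxtimes H}$---is more elementary and entirely self-contained in the finite-dimensional matrix setting; it avoids any appeal to operator-space tensor theory. The paper in fact acknowledges a version of your approach in the remark following Proposition~\ref{sysdual}, but only for \emph{real} matrices via the SDP dual of Proposition~\ref{dual}; your Hahn--Banach formulation handles the general complex case directly. The trade-off is that the paper's argument is the one that generalises beyond matrix algebras, while yours is the shorter, lower-technology proof for the case at hand.
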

\begin{proof}
Let $K\in \cl S_G^{\perp}$ and $L\in \cl S_H^{\perp}$ and notice the following,
\begin{align*}
||X+K||\cdot ||Y+L||&=|| (X+K)\otimes (Y+K)||\\
&=|| X\otimes Y + X\otimes L +K\otimes Y + K\otimes L ||
\end{align*}
Note that $X\otimes L +K\otimes Y + K\otimes L\in \cl S_G^{\perp} \otimes M_m + M_n \otimes \cl S_H^{\perp}$ and by \ref{perp} we have that $\cl S_G^{\perp} \otimes M_m + M_n \otimes \cl S_H^{\perp}=\cl S_{G \boxtimes H}^{\perp}$. Now if we take the infimum on both sides of the above equation, over all $K$ and $L$, we get,
$$||X+\cl S_G^{\perp}||_{osp}\cdot ||Y+\cl S_H^{\perp}||_{osp}\geq \inf\{ ||X\otimes Y +R|| : R\in \cl S_{G\boxtimes H}\}=||X\otimes Y+\cl S_{G\boxtimes H}^{\perp}||_{osp}.$$

The other inequality requires some results from the theory of
operator spaces. Let $Q_1: M_n \to M_n/\cl S_G^{\perp}$ and $Q_2:M_m \to M_m/ \cl S_H^{\perp}$ denote the quotient
maps. Since both of these maps are completely contractive by \cite[Thm. 12.3]{vpbook} the map
$Q_1 \otimes Q_2: M_n \otimes_{min} M_m \to (M_n/ \cl S_G^{\perp})
\otimes_{min} (M_m/ \cl S_H^{\perp})$ is completely contractive.
But $M_n \otimes_{min} M_m = M_{nm}$ and the kernel of $Q_1 \otimes
Q_2$ is $\cl S_{G \boxtimes H}^{\perp}.$  Hence,
\[ \|X \otimes Y + \cl S_{G \boxtimes H}^{\perp} \| \ge \|Q_1(X) \otimes
Q_2(Y)\| = \|Q_1(X)\| \cdot \|Q_2(Y)\|,\]
where the last equality follows from the fact \cite{bp} that the min tensor norm is a cross-norm.
We have that $\|Q_1(X)\|= \|X+ \cl S_G^{\perp}\|_{\osp}$ and $\|Q_2(Y)\|=
\|Y+ \cl S_H^{\perp}\|_{\osp}$ and so the proof is complete.
 \end{proof}

%%%%%%%%%%%%%%%%%%%%%%%%

We now turn our attention to the operator space quotient norm in $M_n/\cl S_G^{\perp}$. Recall that  
$$||X+\cl S_G^{\perp}||_{osy}=\inf\Big\{\lambda : \begin{pmatrix} \lambda I+K_1 & X+K_2 \\ X^*+K^*_2 & \lambda I+K_3\end{pmatrix}\in M_2(M_n)^+,\ \mbox{for } K_i\in \cl S_G^{\perp}\Big\}.$$

\begin{thm}\label{subosy}Let $G$ and $H$ be graphs on $n$ and $m$ vertices, respectively, and let $X \in M_n$ and $Y \in M_m.$  Then
$$||X+\cl S_G^{\perp}||_{osy}||Y+\cl S_H^{\perp}||_{osy}= ||X\otimes Y+\cl S_{G\boxtimes H}^{\perp}||_{osy},$$ that is, $\sigma(G \boxtimes H, X\otimes Y) = \sigma(G, X) \cdot \sigma(H, Y).$
\end{thm}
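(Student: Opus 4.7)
The plan is to establish both inequalities separately, in parallel with the proof of Theorem \ref{ospmult}. The upper bound $\sigma(G \boxtimes H, X \otimes Y) \le \sigma(G, X) \sigma(H, Y)$ is obtained by tensoring positivity certificates: given $\epsilon > 0$, pick $K_1, K_2, K_3 \in \cl S_G^{\perp}$ so that
\[
M_G := \begin{pmatrix} (\sigma(G,X)+\epsilon) I_n + K_1 & X + K_2 \\ X^* + K_2^* & (\sigma(G,X)+\epsilon) I_n + K_3 \end{pmatrix} \ge 0,
\]
and similarly $L_1, L_2, L_3 \in \cl S_H^{\perp}$ giving $M_H \ge 0$. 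The tensor $M_G \otimes M_H$ is positive in $M_2 \otimes M_n \otimes M_2 \otimes M_m$; permuting the two $M_2$ factors together, the $2 \times 2$ principal block (of inner size $nm$) at the outer-diagonal positions is positive, with diagonal entries $(M_G)_{ii} \otimes (M_H)_{ii}$ and off-diagonal entry $(M_G)_{12} \otimes (M_H)_{12}$. Expanding these tensors and invoking Lemma \ref{perp} shows every correction term lies in $\cl S_G^{\perp} \otimes M_m + M_n \otimes \cl S_H^{\perp} = \cl S_{G \boxtimes H}^{\perp}$, so the block reads
$\bigl(\begin{smallmatrix} \lambda\mu I_{nm} + N_1 & X \otimes Y + N_2 \\ (X \otimes Y + N_2)^* & \lambda\mu I_{nm} + N_3 \end{smallmatrix}\bigr)$
with $\lambda = \sigma(G,X)+\epsilon$, $\mu = \sigma(H,Y)+\epsilon$, and $N_1, N_2, N_3 \in \cl S_{G \boxtimes H}^{\perp}$; letting $\epsilon \to 0$ yields the upper bound.

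For the lower bound I pass to the SDP dual. Computing the Lagrangian and using that the annihilator of $\cl S_G^{\perp}$ in $M_n$ (under the pairing $\langle X, A\rangle = \tr(XA^*)$) is $\cl S_G$, strong duality (immediate from Slater feasibility) yields
\[
\sigma(G, X) = \sup\Bigl\{ 2\real \tr(X Z_{12}^*) \,:\, Z_{11}, Z_{12}, Z_{22} \in \cl S_G,\;\; \begin{pmatrix} Z_{11} & Z_{12} \\ Z_{12}^* & Z_{22} \end{pmatrix} \ge 0,\;\; \tr Z_{11} + \tr Z_{22} = 1 \Bigr\},
\]
and analogous formulas for $H$ and $G \boxtimes H$. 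Given a feasible $Z$ with $\tr Z_{11} = a$ and $\tr Z_{22} = 1-a$, the diagonal congruence $Z \mapsto DZD$ by $D = \operatorname{diag}\bigl((2a)^{-1/2}I,\, (2(1-a))^{-1/2}I\bigr)$ produces a feasible $Z'$ with $\tr Z_{11}' = \tr Z_{22}' = 1/2$ and rescales the objective by $1/(2\sqrt{a(1-a)}) \ge 1$; subsequent conjugation by the phase unitary $\operatorname{diag}(1, e^{i\theta})$ preserves feasibility and lets me arrange $\tr(X (Z_{12}')^*) \ge 0$ real. Thus the sup is attained (up to $\epsilon$) by $Z^G$ with $\tr Z_{11}^G = \tr Z_{22}^G = 1/2$ and $\tr(X (Z_{12}^G)^*)$ real non-negative, and analogously $Z^H$. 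Setting $W_{ij} := Z_{ij}^G \otimes Z_{ij}^H$, these lie in $\cl S_G \otimes \cl S_H = \cl S_{G \boxtimes H}$, and $\bigl(\begin{smallmatrix} W_{11} & W_{12} \\ W_{12}^* & W_{22} \end{smallmatrix}\bigr)$ is positive as a principal submatrix of $Z^G \otimes Z^H$. Its trace sum is $1/2$, so rescaling $W$ by $2$ makes it dual-feasible for $\sigma(G \boxtimes H, X \otimes Y)$ with objective $\bigl(2\tr(X (Z_{12}^G)^*)\bigr)\bigl(2\tr(Y (Z_{12}^H)^*)\bigr)$; supping over the near-optimal choices recovers $\sigma(G,X)\sigma(H,Y)$.

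The main obstacle is the normalization step in the lower bound. The trace constraint does not survive tensoring directly, and for general complex $X, Y$ one has $\real(\alpha\beta) \ne \real\alpha \cdot \real\beta$, so the naive tensor $Z^G \otimes Z^H$ fails to produce a dual witness whose objective factors cleanly. The two reductions (diagonal congruence for trace balance, phase rotation for the sign of $\tr(X Z_{12}^*)$) each weakly improve the SDP objective and so may be imposed without loss; once in place, the tensor-and-rescale computation is direct. Both reductions are justified by conjugating $Z$ with a $2 \times 2$ block matrix of scalar operators, which preserves positivity, the subspace memberships $Z_{ij} \in \cl S_G$, and the relevant traces.
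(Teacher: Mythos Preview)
Your upper bound is the same as the paper's: tensor the two $2\times 2$ positivity certificates and compress to the ``outer corners'' to obtain a certificate for $X\otimes Y$ with constant $\lambda\mu$.

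Your lower bound takes a genuinely different route. The paper invokes the characterization
\[
\|X+\cl S_G^{\perp}\|_{\osy}=\sup\{\|\phi(X)\|:\phi:M_n\to\cl B(\cl H)\text{ UCP},\ \phi(\cl S_G^{\perp})=0\}
\]
from \cite[Prop.~4.1]{kptt2010}; since $\phi_G\otimes\phi_H$ is again UCP and annihilates $\cl S_G^{\perp}\otimes M_m+M_n\otimes\cl S_H^{\perp}=\cl S_{G\boxtimes H}^{\perp}$, the product $\|\phi_G(X)\|\cdot\|\phi_H(Y)\|=\|\phi_G\otimes\phi_H(X\otimes Y)\|$ is bounded above by $\|X\otimes Y+\cl S_{G\boxtimes H}^{\perp}\|_{\osy}$. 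Your argument instead passes to the SDP dual and tensors dual witnesses; the paper in fact notes (in the remark after Proposition~\ref{sysdual}) that this works for \emph{real} $X,Y$, and your trace-balancing and phase-rotation reductions are exactly what is needed to push it through for complex $X,Y$, since without them the objective $2\real\tr\bigl((X\otimes Y)W_{12}^*\bigr)$ need not factor. Both reductions are sound: each is a congruence of $Z$ by a block-scalar matrix, hence preserves positivity and the membership $Z_{ij}\in\cl S_G$, and each weakly increases the objective (for the first, one may assume the objective nonnegative since $Z_{12}=0$ is feasible). The tensor-and-compress step for the dual is correct, and the rescaling by $2$ recovers the trace normalization.

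In short, your proof is correct. The paper's UCP-map argument is shorter and more conceptual, drawing on operator-system machinery; your SDP-dual argument is more self-contained, staying within finite-dimensional convex optimization, at the cost of the two normalization steps.
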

\begin{proof}
We use the fact that \cite[Prop. 4.1]{kptt2010},
\begin{gather*}
||X +\cl S_G^{\perp}||_{osy}=sup\{||\phi_G(X)|| : \phi_G: M_n\rightarrow \cl B(\cl H),\ \phi_G(\cl S_G^{\perp})=0, \phi_G \ UCP \ \}\ (*)  
\end{gather*}
where the supremum is over all Hilbert spaces $H$ and UCP stands for ``unital, completely positive''.
Note that,
\begin{align*}
||X+\cl S_G^{\perp}||_{osy}||Y+\cl S_H^{\perp}||_{osy}&=\sup_{\phi_G, \phi_H}\{||\phi_G(X)||\cdot||\phi_H(Y)||\}\\
&=\sup_{\phi_G, \phi_H}\{||\phi_G(X)\otimes \phi_H(Y)||\} \\
&=\sup_{\phi_G, \phi_H}\{||\phi_G \otimes \phi_H(X\otimes Y)||\}
\end{align*}
where this supremum is over all maps that satisfy property $(*)$ and $\phi_G \otimes \phi_H(X\otimes Y)$ is the map that takes elementary tensors to the tensor of the corresponding images of the maps.
Notice $\phi_G \otimes \phi_H $ is a UCP map that vanishes on $\cl S_G^{\perp} \otimes M_m + M_n \otimes\cl S_H^{\perp}=\cl S_{G\boxtimes H}^{\perp}$ (\ref{perp}). Finally note,
\begin{gather*}
 \sup_{\phi_G, \phi_H} \{ \|\phi_G \otimes \phi_H(X \otimes Y)\|\} \leq ||X\otimes Y+\cl S_{G\boxtimes H}^{\perp}||_{osy}.
\end{gather*}

Thus,
\[ \| X \otimes Y + \cl S_{G \boxtimes H}^{\perp}\|_{osy} \ge \|X + \cl S_{G}^{\perp}\|_{osy} \|Y + \cl S_H^{\perp}\|_{osy}.\]

We now prove the other inequality:
$||X+\cl S_G^{\perp}||_{osy}||Y+\cl S_H^{\perp}||_{osy}\geq ||X\otimes Y+\cl S_{G\boxtimes H}^{\perp}||_{osy}.$

Let $\lambda > \|X+ \cl S_G^{\perp}\|_{\osy}$ and pick $K_i\in \cl S_G$ such that the $2n \times 2n$ block matrix
$$\begin{pmatrix} \lambda I_n+K_1 & X+K_2 \\ X^*+K^*_2 & \lambda I_n+K_3\end{pmatrix} \ge 0.$$
Similarly, let $\mu > \|Y + \cl S_H^{\perp}\|_{\osy}$ and pick
  $L_i\in \cl S_H$ such that the $2m \times 2m$ matrix
$$\begin{pmatrix} \mu I_m+L_1 & Y+L_2 \\ Y^*+L^*_2 & \mu I_m+L_3\end{pmatrix} \ge 0.$$

Tensoring these matrices we have that the $4mn \times 4mn$ block matrix,
\[
\begin{pmatrix} \lambda I_n+K_1 & X+K_2 \\ X^*+K^*_2 & \lambda I_n+K_3\end{pmatrix}\otimes\begin{pmatrix} \mu I_m+L_1 & Y+L_2 \\ Y^*+L^*_2 & \mu I_m+L_3\end{pmatrix} \ge 0 .\]

%\begin{pmatrix} (\lambda I_n+K_1)\otimes (\mu I_m+L_1) &(\lambda I_n +K_1) %\otimes (Y+L_2) & (X+K_2) \otimes (\mu I_m +L_1) & (X+K_2)\otimes(Y+L_2) \\
%(\lambda I_n +K_1) \otimes (Y^* + L_2^*) & (\lambda I_n +K_1) \otimes (\mu I_%m + L_3) & (X+K_2) \otimes (Y^* +L_2^*) & (X+K_2) \otimes (\mu I_m + L_3) \\ 
%(X^* +K_2^*) \otimes (\mu I_m +L_1) & (X^*+K^*_2)\otimes (Y+L_2) & (\lambda %I_n+K_3)\otimes (\mu I_m+L_1) & (\lambda I_n +K_3) \otimes (Y+L_2) \\
%(X^*+K_2^*) \otimes (Y^* + L_2^*) & (X^*+K_2^*) \otimes (\mu I_m +L_3) & (\l%ambda I_n +K_3) \otimes (Y^* + L_2^*) & (\lambda I_n + K_3) \otimes (\mu I_m %+L_3) \end{pmatrix}$$
%\end{multline*}

%$$=\begin{pmatrix} \lambda\cdot\mu (I_n \otimes I_m)+Q_1 & X\otimes Y+Q_%2 \\ (X\otimes Y +Q_2)^* & \lambda\cdot \mu(I_n \otimes I_m)+Q_3 \end{pma%trix}
%\end{multline*}
Restricting to the 4 blocks that occur in the corners we see that
$$\begin{pmatrix} (\lambda I_n+K_1)\otimes (\mu I_m+L_1) & (X+K_2)\otimes(Y+L_2) \\ (X^*+K^*_2)\otimes (Y^*+L^*_2) & (\lambda I_n+K_3)\otimes (\mu I_m+L_3) \end{pmatrix} \ge 0$$
But this matrix is of the form
$$\begin{pmatrix} \lambda\cdot\mu (I_n \otimes I_m)+Q_1 & X\otimes Y+Q_2 \\ (X\otimes Y +Q_2)^* & \lambda\cdot \mu(I_n \otimes I_m)+Q_3 \end{pmatrix}$$
for some $Q_i\in \cl S_G\otimes M_m+M_n\otimes \cl S_H=\cl S_{G\boxtimes H}^{\perp}$.
From this it follows that 
\[ \| X \otimes Y + \cl S_{G \boxtimes H}^{\perp}\|_{\osy} \le \lambda \mu.\]
Since $\lambda$ and $\mu$ were arbitrary,
$$||X+\cl S_G^{\perp}||_{osy}||Y+\cl S_H^{\perp}||_{osy}\geq ||X\otimes Y+\cl S_{G\boxtimes H}^{\perp}||_{osy}$$
and the proof is complete.
\end{proof}

For the purposes of numerical calculation it is often convenient to have
 dual formulations for computing 
 $||X+\cl S_G^{\perp}||_{osp}$ and $\|X + \cl S_G^{\perp}\|_{\osy},$
 especially in the case that $X$ is a real matrix. We write $M_n(\bb
 R)$ for the set of real matrices and $X^T$ for the transpose of the matrix $X.$

\begin{prop}\label{achieving} Let $G$ be a graph on $n$ vertices and
  let $X \in M_n(\bb R).$ Then
$||X+\cl S_G^{\perp}||_{osp}=\|X+H\|$ for some $H \in \cl S_G^{\perp}
\cap M_n(\bb R).$ 
\end{prop}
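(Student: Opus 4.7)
The plan is to exploit two facts: the infimum defining $\|X + \cl S_G^{\perp}\|_{osp}$ is attained because we are minimizing a coercive continuous function over a finite-dimensional subspace, and when $X$ is real we can symmetrize any minimizer under complex conjugation without increasing its norm.

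First, I would establish existence of a minimizer. Since $\|X + K\| \ge \|K\| - \|X\|$, the map $K \mapsto \|X+K\|$ is coercive on $\cl S_G^{\perp}$, so the infimum is achieved on a bounded (hence compact, by finite-dimensionality of $\cl S_G^{\perp}$) subset. Let $K_0 \in \cl S_G^{\perp}$ be a minimizer.

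Next, the key observation: $\cl S_G^{\perp} = \operatorname{span}\{E_{i,j} : (i,j) \in G^c\}$ has a basis of \emph{real} matrices. Hence if we write $K_0 = \operatorname{Re}(K_0) + i \operatorname{Im}(K_0)$, then both $\operatorname{Re}(K_0)$ and $\operatorname{Im}(K_0)$ lie in $\cl S_G^{\perp} \cap M_n(\bb R)$, and in particular $\overline{K_0} \in \cl S_G^{\perp}$.

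The third step is to use that the operator norm on $M_n$ is invariant under entrywise complex conjugation: for any $A \in M_n$ one has $\|\overline{A}\| = \|A\|$ (because $\overline{A^*A} = A^T \overline{A}$ has the same spectrum as $A^*A$, or equivalently because conjugation is a conjugate-linear isometry on $\bb C^n$). Since $X$ is real, $\overline{X + K_0} = X + \overline{K_0}$, and therefore
\[
\|X + \overline{K_0}\| \;=\; \|\overline{X + K_0}\| \;=\; \|X + K_0\|.
\]
By convexity of the norm,
\[
\|X + \operatorname{Re}(K_0)\| \;=\; \Bigl\| X + \tfrac{1}{2}(K_0 + \overline{K_0}) \Bigr\| \;\le\; \tfrac{1}{2}\bigl(\|X + K_0\| + \|X + \overline{K_0}\|\bigr) \;=\; \|X + K_0\|.
\]
Setting $H := \operatorname{Re}(K_0) \in \cl S_G^{\perp} \cap M_n(\bb R)$ gives $\|X + H\| \le \|X + K_0\| = \|X + \cl S_G^{\perp}\|_{osp}$, and the reverse inequality is automatic since $H \in \cl S_G^{\perp}$.

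No serious obstacle is expected: the argument is just existence of a minimizer in finite dimensions plus a standard symmetrization. The only point worth flagging carefully is that $\cl S_G^{\perp}$ really is stable under complex conjugation, which follows immediately from Proposition 4.7 (or the explicit description $\cl S_G^{\perp} = \operatorname{span}\{E_{i,j} : (i,j) \in G^c\}$) since the matrix units have real entries.
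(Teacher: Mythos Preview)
Your proof is correct and follows essentially the same route as the paper: take a minimizer $K$ (which exists by finite-dimensionality), observe that $\overline{K}\in\cl S_G^{\perp}$ with $\|X+\overline{K}\|=\|X+K\|$ since $X$ is real, and average to obtain a real minimizer $H=\tfrac12(K+\overline{K})$. Your write-up simply fills in more justification (coercivity, why conjugation is an isometry, why $\cl S_G^{\perp}$ is conjugation-stable) than the paper provides.
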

\begin{proof} Given a matrix $Y= (y_{i,j})$ we set $\overline{Y}= ( \overline{y}_{i,j}).$
Since $\cl S_G^{\perp}$ is a subspace of $M_n$ we know that there is a
$K\in \cl S_G^{\perp}$ such that $||X+\cl
S_G^{\perp}||_{osp}=||X+K||$. Now since $||X+K||=||\overline{X+K}||=
\|X + \overline{K}\|$ and $\overline{K}\in \cl S_G^{\perp}$ we get that,
$$||X+\cl S_G^{\perp}||_{\osp}\geq||X+\frac{K+\overline{K}}{2}||$$
so we have that $||X+\cl S_G^{\perp}||_{osp}=||X+H||$ where
$H=\frac{K+\overline{K}}{2}\in \cl S_G^{\perp} \cap M_n (\bb R).$
\end{proof}

\begin{prop}\label{dual} Let $G$ be a graph on $n$ vertices and let $X
  \in M_n(\bb R)$ be a real matrix.  Then
$$||X+\cl S_G^{\perp}||_{osp}=\max\{ Tr(X^T Q) : Q\in \cl S_G^{\perp}\cap M_n(\bb R), Tr(|Q|)\leq 1\}$$
\end{prop}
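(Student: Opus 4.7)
The plan is to derive the formula by Banach-space duality for the quotient $M_n/\cl S_G^{\perp}$, using the standard identification $M_n^{*}\cong (M_n,\|\cdot\|_{1})$ via the bilinear trace pairing $(A,B)\mapsto\Tr(AB)$. By Hahn--Banach,
$$
\|X+\cl S_G^{\perp}\|_{\osp}=\sup\bigl\{|\Tr(XB)|:B\in M_n,\ \Tr(BK)=0\ \forall K\in\cl S_G^{\perp},\ \|B\|_{1}\le 1\bigr\}.
$$
Since $\Tr(BE_{i,j})=B_{j,i}$ and the index set $G^{c}$ is symmetric, the annihilator of $\cl S_G^{\perp}$ under this pairing is exactly $\cl S_G$, yielding
$$
\|X+\cl S_G^{\perp}\|_{\osp}=\sup\{|\Tr(XB)|:B\in\cl S_G,\ \|B\|_{1}\le 1\}.
$$

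I would then reduce to real $B$ using the same conjugation trick as in Proposition~\ref{achieving}: averaging $B$ with $\overline B$ stays in $\cl S_G$, does not increase $\|B\|_{1}$, and preserves $\real\Tr(XB)$ when $X$ is real; absorbing a unit scalar removes the absolute value. Finally, the substitution $Q:=B^{T}$ is an isometric involution of $\cl S_G\cap M_n(\bb R)$ by transpose-symmetry of $\widetilde G$, and cyclicity gives $\Tr(XB)=\Tr(B^T X^T)=\Tr(X^{T}Q)$, so the supremum becomes
$$
\max\{\Tr(X^{T}Q):Q\in\cl S_G\cap M_n(\bb R),\ \|Q\|_{1}\le 1\},
$$
a maximum by compactness of the feasible set in finite dimensions.

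Nothing in the argument is truly difficult; the only care needed is at the annihilator step, where one must use the \emph{bilinear} trace pairing (which coincides with the HS complement only because $\cl S_G^{\perp}$ is transpose-invariant). I would also flag what appears to be a typographical slip in the printed statement: the duality forces $Q\in\cl S_G$ rather than $Q\in\cl S_G^{\perp}$, and the printed version already fails for $G=\emptyset$, $n=2$, $X=E_{1,1}$, where the left side equals $1$ while every real $Q\in\cl S_G^{\perp}$ gives $\Tr(X^{T}Q)=0$.
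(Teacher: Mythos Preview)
Your proposal is correct and follows essentially the same route as the paper: the paper's proof opens by saying the result ``follows from general facts about the `dual of a quotient' in Banach space theory together with the fact that the trace norm is the dual of the operator norm,'' which is exactly your Hahn--Banach argument, and then offers as an alternative the SDP duality from \cite{semidef}. Your version is more self-contained since it spells out the annihilator computation and the real reduction rather than citing the SDP reference.

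You are also right about the typographical slip in the displayed statement: the paper's own dual program imposes $\Tr((E_{i,j})^{T}Q)=q_{i,j}=0$ for all $E_{i,j}\in\cl S_G^{\perp}$, which is precisely the condition $Q\in\cl S_G$, not $Q\in\cl S_G^{\perp}$. Your counterexample with $G=\emptyset$, $n=2$, $X=E_{1,1}$ confirms the printed version cannot stand as written.
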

\begin{proof} This follows from general facts about the ``dual of a
  quotient'' in Banach space theory together with the fact that the
  trace norm is the dual of the operator norm. Alternatively, 
this is a consequence of Example $(34)$ in \cite{semidef}, which
states that for the following minimization problem,
$$||X+\cl S_G^{\perp}||_{osp}=\min\{||X+\sum_{i,j} k_{i,j} E_{ij} ||: E_{ij}\in \cl S_G^{\perp}, k_{ij}\in \mathbb{R} \}$$
its dual is given by,
\begin{equation*}
\begin{aligned}
& {\text{maximize}}
& & Tr(X^T Q) \\
& \text{subject to}
& & Tr( (E_{ij})^TQ)=0, \; E_{ij}\in \cl S_G^{\perp} \\ 
& & & Tr(|Q|)\leq 1,
\end{aligned}
\end{equation*}
where $E_{i,j}$ denote the usual matrix units.
Now since $Tr( (E_{ij})^TQ)=q_{ij}=0$, where $q_{ij}$ is the $ij$-entry of $Q$, we get our result.
\end{proof}
%\begin{remark}
%Notice that we are using \ref{achieving} in the above proposition since we need $k_{ij}\in \mathbb{R}$ in order to apply the results in \cite{semidef}. Also in \cite{semidef} the authors discuss the fact that $||X+\cl S_G^{\perp}||_{osp}$ has an SDP representation and the SDP dual (the one used in the proposition) coincides with the linear dual for this problem.
%\end{remark}

We now turn our attention to a dual formulation of
$||X+\cl S_G^{\perp}||_{osy}$ as an SDP, but just like in the case 
of the operator space norm we first need the following lemma,

\begin{lemma}
Let $G$ be a graph on $n$ vertices and
  let $X \in M_n(\bb R).$ Then the value of 
$||X+\cl S_G^{\perp}||_{osy}$ is achieved for some choice of $K_i \in \cl S_G^{\perp}
\cap M_n(\bb R)$, $i=1,2,3$ with $K_1=K_1^T, \, K_3=K_3^T.$ 
\end{lemma}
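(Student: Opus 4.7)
The plan is to break the argument into two stages: first, show that the infimum defining $\|X + \cl S_G^{\perp}\|_{\osy}$ is actually attained; second, symmetrize any given minimizer so that all three $K_i$ become real and $K_1,K_3$ become symmetric. The symmetrization step is a straightforward averaging trick, entirely parallel to the one used in Proposition~\ref{achieving} for the operator space norm; the real work is in the attainment.

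For attainment, I would choose a sequence $(\lambda_m, K_1^{(m)}, K_2^{(m)}, K_3^{(m)})$ with $\lambda_m \searrow \lambda_0 := \|X + \cl S_G^{\perp}\|_{\osy}$ making the $2n \times 2n$ block matrix positive. Since $\cl S_G^{\perp}$ is spanned by off-diagonal matrix units $E_{i,j}$, every element of $\cl S_G^{\perp}$ has trace zero. The diagonal blocks of a positive matrix are positive, so $\lambda_m I + K_i^{(m)} \ge 0$ for $i=1,3$, and combined with the traceless condition a short eigenvalue computation yields the uniform bound $\|K_i^{(m)}\| \le (n-1)\lambda_m$. The standard PSD-block inequality $\|B\|^2 \le \|A\|\,\|C\|$ then bounds $\|X + K_2^{(m)}\|$, hence $\|K_2^{(m)}\|$. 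By compactness in the finite-dimensional space $\cl S_G^{\perp} \oplus \cl S_G^{\perp} \oplus \cl S_G^{\perp}$ together with closedness of the PSD cone in $M_{2n}$, a subsequence converges to a minimizer at $\lambda = \lambda_0$.

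For symmetrization, fix such a minimizer $(K_1, K_2, K_3)$. It satisfies $K_1^* = K_1$ and $K_3^* = K_3$ automatically, because the diagonal blocks of a Hermitian matrix are Hermitian. Since every $E_{i,j}$ is a real matrix, $\cl S_G^{\perp}$ is closed under entrywise complex conjugation $K \mapsto \overline K$, and by hypothesis $X$ is real. Entrywise conjugation preserves positivity, so conjugating the block matrix gives another positive block matrix of the same form with each $K_i$ replaced by $\overline{K_i}$. Averaging the two, using convexity of the PSD cone and the identity $\overline{K_2^*} = (\overline{K_2})^*$, replaces each $K_i$ by $\real(K_i) \in \cl S_G^{\perp} \cap M_n(\bb R)$ while preserving $\lambda_0$. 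Finally, since $K_1 = K_1^*$ and $\real(K_1)$ is real, we get $\real(K_1)^T = \real(K_1^*) = \real(K_1)$, i.e., $\real(K_1)$ is symmetric; similarly $\real(K_3)$ is real symmetric.

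The main obstacle is the attainment step, since the quotient-norm infimum is taken over a set that is not obviously bounded. The necessary bounds on the $K_i^{(m)}$ have to be extracted from both the positivity constraint and the particular algebraic structure of $\cl S_G^{\perp}$, specifically the absence of diagonal matrix units (which is where $G$ being a simple graph enters). Once attainment is secured, the passage to real symmetric coefficients is purely formal.
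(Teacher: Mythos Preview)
Your proposal is correct and follows essentially the same route as the paper: attain the infimum, then replace each $K_i$ by its real part via entrywise conjugation and averaging, exactly as the paper does. The only difference is that you supply an explicit compactness argument for attainment, whereas the paper simply asserts it (implicitly relying on the earlier remark that for finite-dimensional kernels $\cl D_n(\cl S/\cl J)=M_n(\cl S/\cl J)^+$, so the osy norm is actually a minimum).
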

\begin{proof} Suppose $||X+\cl S_G^{\perp}||_{osy}=\lambda$. By definition, 
 $$\begin{pmatrix} \lambda I+K_1 & X+K_2 \\ X^*+K^*_2 & \lambda I+K_3\end{pmatrix}\geq 0$$ 
for some choice of $K_i\in \cl S_G^{\perp}$, $i=1,2,3$ with $K_1=K_1^*, \, K_3K_3^*.$ Now note that,
$$0\leq\overline{\begin{pmatrix} \lambda I+K_1 & X+K_2 \\ X^*+K^*_2 & \lambda I+K_3\end{pmatrix} }=\begin{pmatrix} \lambda I+\overline{K_1} & X+\overline{K_2} \\ X^*+\overline{K^*_2} & \lambda I+\overline{K_3}\end{pmatrix}$$
Finally if we average over this two positive matrices,
$$ \frac{1}{2}\Big[\begin{pmatrix} \lambda I+K_1 & X+K_2 \\ X^*+K^*_2 & \lambda I+K_3\end{pmatrix}+ \begin{pmatrix} \lambda I+\overline{K_1} & X+\overline{K_2} \\ X^*+\overline{K^*_2} & \lambda I+\overline{K_3}\end{pmatrix}]=\begin{pmatrix} \lambda I+\frac{K_1+\overline{K_1} }{2} & X+\frac{K_2+\overline{K_2} }{2} \\ X^*+\frac{K_2^*+\overline{K_2^*} }{2} & \lambda I+\frac{K_3+\overline{K_3} }{2}\end{pmatrix}$$
we get our desired result.
\end{proof}

\begin{prop}\label{sysdual} Let $G$ be a graph on $n$ vertices and let
  $X \in M_n(\bb R),$ then
$$||X+\cl S_G^{\perp}||_{osy}=\max\{ 2\cdot Tr(X^T B)
: \begin{pmatrix} A & B\\B^T & C \end{pmatrix}\in M_2(\cl S_G)^+ ,
Tr(A+C)= 1\},$$
with $A,B,C \in M_n(\bb R).$
\end{prop}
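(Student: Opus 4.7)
The plan is to identify the right-hand side as the Lagrangian dual of the primal SDP that computes the left-hand side, and then to verify strong duality and attainment by checking Slater's condition on both sides. By the preceding lemma,
\[ \|X + \cl S_G^{\perp}\|_{\osy} = \min\left\{\lambda : \begin{pmatrix} \lambda I + K_1 & X + K_2 \\ X^T + K_2^T & \lambda I + K_3 \end{pmatrix} \succeq 0 \right\}, \]
where $K_1, K_3$ range over real symmetric matrices in $\cl S_G^{\perp}$ and $K_2$ ranges over all real matrices in $\cl S_G^{\perp}$. This is a real-variable SDP, and primal Slater's condition holds by taking any $\lambda > \|X\|$ with $K_i = 0$.

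Introduce a dual variable $Y = \begin{pmatrix} A & B \\ B^T & C \end{pmatrix} \in M_{2n}(\bb R)$ with $Y \succeq 0$ (so $A, C$ are automatically symmetric) and form the Lagrangian $L = \lambda - \Tr(YF)$, where $F$ is the $2n \times 2n$ block matrix above. Stationarity in $\lambda$ gives $\Tr(A + C) = 1$. Stationarity in $K_1$ over real symmetric matrices in $\cl S_G^{\perp}$ forces the $(i,j)$-entry of $A$ to vanish for all $(i,j) \in G^c$; similarly for $C$. Stationarity in $K_2$ over all real matrices in $\cl S_G^{\perp}$ forces $B$ to vanish on $G^c$. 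Hence the dual feasible set is exactly
\[ \left\{ \begin{pmatrix} A & B \\ B^T & C \end{pmatrix} \in M_2(\cl S_G)^+ : A, B, C \in M_n(\bb R),\ \Tr(A + C) = 1 \right\}. \]

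A short computation shows $L = -\Tr\!\left(Y \begin{pmatrix} 0 & X \\ X^T & 0 \end{pmatrix}\right) = -2\Tr(X^T B)$ at the stationary point. Conjugation of $Y$ by $D = \mathrm{diag}(I_n, -I_n)$ preserves positivity, the trace constraint, and membership in $M_2(\cl S_G)^+$ while sending $B$ to $-B$, so the supremum of $-2\Tr(X^T B)$ over the dual feasible set equals that of $+2\Tr(X^T B)$. Dual Slater's condition holds with $Y = \tfrac{1}{2n} I_{2n}$, so strong duality yields the stated equality. The max is attained because the dual feasible set is compact: it is closed, and the constraints $\Tr(A + C) = 1$ with $A, C \succeq 0$ and $\|B\|^2 \le \|A\|\|C\|$ bound the feasible region.

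The main obstacle is bookkeeping: separating the constraints $A, B, C \in \cl S_G$ from the PSD condition, verifying Slater on both sides to secure strong duality, and resolving the sign in $\pm 2\Tr(X^T B)$ via the $D$-conjugation symmetry; once these pieces are in place, the proposition follows from standard SDP duality.
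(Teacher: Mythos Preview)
Your argument is correct and follows essentially the same route as the paper: write the primal as a real SDP, pass to the Lagrangian/conic dual, and invoke Slater's condition for strong duality. The paper simply cites \cite{semidef} for the dual formula and checks primal strict feasibility, whereas you derive the dual constraints explicitly, resolve the sign of the objective via the $D=\mathrm{diag}(I_n,-I_n)$ symmetry, and supply the compactness argument for attainment of the maximum---all details the paper leaves implicit.
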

\begin{proof}
Notice that we can write $||X+\cl S_G^{\perp}||_{osy}$ as the following SDP:

minimize 
  \ \ $\inner{x}{c}$ \\
subject to
\[ \begin{pmatrix} \lambda I & X\\X^* & \lambda I \end{pmatrix} 
\\+ \sum_{(i,j)\in \overline{G}}\begin{pmatrix}k_{i,j}( E_{ij}+E_{ji})
  & z_{i,j}E_{i,j}\\z_{i,j}E_{j,i} & y_{i,j}(E_{i,j} + E_{j,i}) \end{pmatrix}\geq 0
\]
for $c =
    \begin{cases}
            1, &         \text{if } l=1\\
            0, &         \text{if } l\neq 1
    \end{cases}$
and $x=
	\begin{cases}
			\lambda, & \text{if } l=1 \\
			k_{ij},  & \text{if } 2\leq l\leq \lfloor \frac{dim(\cl S_G^{\perp})}{2}\rfloor\\
			y_{ij},  & \text{if } \lfloor \frac{dim(\cl S_G^{\perp})}{2}\rfloor < l \leq dim(\cl S_G^{\perp})\\
			z_{ij},  & \text{if } dim(\cl S_G^{\perp}) < l \leq \lfloor \frac{3 dim(\cl S_G^{\perp})}{2}\rfloor
	\end{cases}.$ 
	
Now by \cite{semidef} the dual of the above program is given by,
\begin{equation*}
\begin{aligned}
& {\text{maximize}}
& & 2\cdot Tr(X^T B) \\
& \text{subject to}
& & \begin{pmatrix} A & B\\B^T & C \end{pmatrix}\in M_2(\cl S_G)^+ \\ 
& & & Tr(A+C)=1.
\end{aligned}
\end{equation*}
Finally, we see that strong duality also holds for this SDP since we
can always pick,
$$x=\begin{cases} 
		\lambda=\max\limits_j \big{\{}\sum\limits_{i=1}^{n} |X_{ij}| \big{\}}+1, & \text{if } l=1\\
		0, & \text{if } l \neq 1
	\end{cases}
$$
($X_{ij}$ is the $ij$-entry of X) such that our constraint satisfies,
$$\begin{pmatrix} \lambda \cdot I & X\\X^* & \lambda \cdot I \end{pmatrix}>0.$$
\end{proof}
\begin{remark} The two multiplicativity theorems, Theorem~\ref{ospmult} and Theorem~\ref{subosy}, can be proven for real matrices $X$ and $Y$ using these two dual formulations.
\end{remark} 
%%%%%%%%%%%%%%%%%%
\section{Quotient Norms as Graph Parameters}
Lov\'{a}sz's famous sandwich theorem says that
\[ \omega (G) \le \vartheta( \overline{G} ) \le \chi (G),\]
where $\omega(G)$ is the size of the largest clique in $G$ and
$\chi(G)$ is the chromatic number of $G.$
One of the many formulas for Lov\'{a}sz's theta function is that
%Lov\'{a}sz also proved several formulas
%for this quantity, including
\[ \vartheta(\overline{G}) = \min \{ \lambda_1(R_G +K): K=K^* \in \cl S_G^{\perp}
\},\]
where $\lambda_1$ denotes the largest eigenvalue.
 Note that by Proposition~\ref{achieving},
$$d_{\infty}(G, R_G)=\inf\{||R_G+K||: K=K^*=K^t\in \cl S_G^{\perp} \}.$$
Since for self-adjoint matrices their norm is the maximum of the
absolute values of their eigenvalues,
$$\vartheta(\overline{G}) \leq d_{\infty}(G, R_G).$$
The only potential difference between these two quantities is that for
any matrix $K=K^* \in \cl S_G^{\perp}$ with $\lambda_1(R_G+K)=
\vartheta(\overline{G})$ we have that
$-\lambda_n(R_G+K) > \lambda_1(R_G+K).$

This suggests we should examine the question of equality of these two
parameters and study the role that the potentially larger
$d_{\infty}(G, R_G)$ could play in sandwich type
theorems. 

We begin with an example where $\vartheta(\overline{G}) <
d_{\infty}(G, R_G).$
 For $G=C_6$ we know that $\vartheta(\overline{G})=2$, but $d_{\infty}(G, R_G)=2.25$. To see that this is the case notice that for any $K=K^*=K^t\in \cl S_G^{\perp}$
\begin{equation*}
A=\frac{\sum\limits_{k=0}^{5}{(S^{*})^k(R_G+K)S^k}}{6}=
\begin{pmatrix}
 1 & 1 & a & b & a & 1 \\
 1 & 1 & 1 & a & b & a \\
 a & 1 & 1 & 1 & a & b \\
 b & a & 1 & 1 & 1 & a \\
 a & b & a & 1 & 1 & 1 \\
 1 & a & b & a & 1 & 1 \\
\end{pmatrix}
\end{equation*}
where $S$ is the cyclic forward shift mod 6. Since $K$ is real and
symmetric, $a,b\in \mathbb{R}$ by \ref{achieving}. Now since
$||A||\leq ||R_G+K||$ for any $K\in \cl S_G^{\perp}$, we have that
$d_{\infty}(G, R_G)$ achieves its minimum value at such a matrix $A$
for some choice of $a$ and $b$. A similar argument shows that
$\lambda_1(R_G+K)$ achieves its minimum at such a matrix $A.$ Now notice that for this matrix we can explicitly compute its spectrum 
\begin{equation*}
\sigma(A)=\{-a-b+2, -a-b+2, 2 a-b-1, b-a,b-a, 2 a+b+3\}
\end{equation*}
and hence if we perform the following minimization we get that,
$$d_{\infty}(G, R_G)=\min_{a,b\in \mathbb{R}} \max\{|-a-b+2|, |2 a-b-1|, |b-a|, |2 a+b+3|\}=2.25$$
achieved when $a=-0.25$ and $b=0.5$. 

Similarly, minimizing $\lambda_1(A)$ over all $a$ and $b$ yields the
well-known fact that
$\vartheta(\overline{G}) =2.$

This fact gives rise to a new condition on the graph, namely, what happens when $\vartheta(\overline{G}) = d_{\infty}(G, R_G)$? 

Note that the orthogonal projection, $P_G: M_n \to \cl S_G$ is given
by Shur product with $R_G.$ Although $P_G$ has norm one when we regard
$M_n$ as a Hilbert space, in general, when we endow $M_n$ with the
usual operator norm then $\|P_G\|$ can be much larger than 1. It is
this latter norm that we are interested in.
For operator theorists, this is known as the {\it Schur multiplier
  norm of} $R_G$, sometimes denoted $\|R_G\|_m.$ For graph theorists,
this is sometimes denoted $\gamma(G).$ 

\begin{prop} If $\vartheta(\overline{G})=d_{\infty}(G, R_G)$, then
\[ \frac{1+ \lambda_1(A_G)}{\|P_G\|} \le \vartheta(\overline{G}).\]
\end{prop}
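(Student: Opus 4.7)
The plan is to exploit the fact that $P_G$, being Schur multiplication by $R_G$, fixes $R_G$ and annihilates $\cl S_G^{\perp}$. Combined with the infimum description of $d_{\infty}(G, R_G)$ from Proposition~\ref{achieving}, this gives the bound essentially for free.

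First, I would observe that since $R_G$ has entries in $\{0,1\}$, Schur multiplication by $R_G$ acts as a projection: $P_G(R_G) = R_G \circ R_G = R_G$. Next, I would note that every $K \in \cl S_G^{\perp}$ is supported on pairs $(i,j) \notin \widetilde G$, while $R_G$ is supported on $\widetilde G$, so the Schur product $R_G \circ K = 0$, i.e.\ $P_G(K) = 0$. Consequently, for every $K \in \cl S_G^{\perp}$,
\[
R_G \;=\; P_G(R_G) \;=\; P_G(R_G + K).
\]

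Taking operator norms and using the definition of the Schur multiplier norm $\|P_G\|$ (which satisfies $\|R_G \circ X\| \le \|P_G\|\,\|X\|$ for every $X \in M_n$), I get
\[
\|R_G\| \;=\; \|P_G(R_G + K)\| \;\le\; \|P_G\| \cdot \|R_G + K\|
\]
for each $K \in \cl S_G^{\perp}$. Taking the infimum over $K$ on the right and invoking the formula $d_{\infty}(G, R_G) = \inf\{\|R_G+K\| : K \in \cl S_G^{\perp}\}$ yields
\[
\|R_G\| \;\le\; \|P_G\| \cdot d_{\infty}(G, R_G).
\]

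Finally, the hypothesis $\vartheta(\overline{G}) = d_{\infty}(G, R_G)$ together with $\|R_G\| = 1 + \lambda_1(A_G)$ gives
\[
\frac{1 + \lambda_1(A_G)}{\|P_G\|} \;=\; \frac{\|R_G\|}{\|P_G\|} \;\le\; \vartheta(\overline{G}),
\]
which is the desired inequality. There is no real obstacle here: the whole proof rests on the single observation that $P_G$ leaves $R_G$ fixed and kills $\cl S_G^{\perp}$, so that the Schur multiplier bound applied to $R_G + K$ yields a uniform lower bound on $\|R_G + K\|$ independent of $K$.
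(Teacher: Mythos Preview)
Your proof is correct and follows essentially the same route as the paper: both use that $P_G(R_G+K)=R_G$ together with the Schur multiplier norm bound to obtain $\|R_G\|\le \|P_G\|\cdot\|R_G+K\|$, and then invoke $\|R_G\|=1+\lambda_1(A_G)$. The only cosmetic difference is that the paper selects a specific $K$ achieving $\vartheta(\overline{G})=\lambda_1(R_G+K)$, whereas you take the infimum over all $K$ directly, which is arguably a bit cleaner.
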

\begin{proof}
In \cite{lo} it was show that there exists a self-adjoint matrix of the form $R_G +K$ with $K \in \cl S_G^{\perp}$ such that
$\vartheta(\overline{G}) = \lambda_1(R_G +K)$. Now, if $\vartheta(\overline{G})=d_{\infty}(G, R_G)$, then $\|R_G +K\|= \lambda_1(R_G +K)$, and we get that
 
 \[ \|R_G\| = \|P_G(R_G+K)\| \le \|P_G\|\cdot \|R_G+K\|= \|P_G\|\vartheta(\overline{G}),\] 
so that
\[\frac{\|R_G\|}{\|P_G\|} \le \vartheta(\overline{G}).\] 

Also, it is the case that $\|A_G\|= \lambda_1(A_G)$\cite{sp}, from which it
follows that $\|R_G\|= \|I+A_G\|= 1 + \lambda_1(A_G).$
\end{proof}

\begin{cor}\label{newsandwich}If
  $\vartheta(\overline{G})=d_{\infty}(G, R_G)$,
  then \[ \frac{\chi(G)}{\|P_G\|} \le  \vartheta(\overline{G}) \le \chi(G).\]   
\end{cor}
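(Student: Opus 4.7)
The plan is to get the right-hand inequality for free from the classical Lov\'{a}sz sandwich theorem $\vartheta(\overline{G}) \le \chi(G)$, so the real content is the left-hand inequality $\chi(G)/\|P_G\| \le \vartheta(\overline{G})$. The preceding proposition, whose hypothesis we are assuming, already yields
\[
\frac{1+\lambda_1(A_G)}{\|P_G\|} \le \vartheta(\overline{G}),
\]
so to land at $\chi(G)/\|P_G\| \le \vartheta(\overline{G})$ it would suffice to bound $\chi(G)$ above by $1 + \lambda_1(A_G)$.

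First, I would invoke Wilf's classical spectral bound on the chromatic number, namely $\chi(G) \le 1 + \lambda_1(A_G)$, which is a standard result from algebraic graph theory and holds for any simple graph. Combining this with the displayed inequality above immediately produces
\[
\frac{\chi(G)}{\|P_G\|} \;\le\; \frac{1+\lambda_1(A_G)}{\|P_G\|} \;\le\; \vartheta(\overline{G}).
\]

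For the other half of the sandwich, I would simply cite Lov\'{a}sz's original sandwich theorem, which is quoted at the start of this section and gives $\vartheta(\overline{G}) \le \chi(G)$ unconditionally (no hypothesis on $d_\infty(G,R_G)$ needed). Concatenating the two inequalities produces the corollary.

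There is essentially no obstacle here beyond recognizing that Wilf's bound is exactly what bridges the previous proposition to the desired left-hand side of the new sandwich. The only thing worth remarking on in the write-up is that the hypothesis $\vartheta(\overline{G}) = d_{\infty}(G,R_G)$ is used only through the previous proposition; the upper estimate $\vartheta(\overline{G}) \le \chi(G)$ requires no such assumption, so the extra strength of this corollary over Lov\'{a}sz's sandwich theorem lies entirely in the new lower bound $\chi(G)/\|P_G\|$.
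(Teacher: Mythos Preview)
Your proof is correct and matches the paper's own argument exactly: the paper simply invokes Wilf's theorem $\chi(G)\le 1+\lambda_1(A_G)$ to combine with the preceding proposition, with the upper bound $\vartheta(\overline{G})\le\chi(G)$ coming from the Lov\'asz sandwich theorem already quoted in the section.
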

\begin{proof} By Wilf's theorem \cite{sp},  $1 + \lambda_1(A_G) \ge \chi(G).$
\end{proof}

%Spielman\cite[Theorem 3.7.3]{sp} has a lower bound:
%\[ \chi(G) \ge \frac{\lambda_1 - \lambda_n}{- \lambda_n}= 1 + |\frac{\lambda_1}{\lambda_n}|.\]
%%%%%

We now give at least one condition for when these parameters are
equal, although it is very restrictive. 

\begin{thm}
If $\vartheta(\overline{G})\leq 2$ then there exists a matrix $A$ satisfying $a_{ij}=1$ when i=j or $i\nsim j$ (1) with $\lambda_1(A)=\vartheta(G)=||A||$.
\end{thm}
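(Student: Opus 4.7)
The plan is to recast the conclusion as the identity $d_\infty(\overline{G}, R_{\overline{G}}) = \vartheta(G)$. Every $A$ of the stated shape lies in the affine family $\mathcal{F} = \{R_{\overline{G}} + L : L = L^* \in \cl S_{\overline{G}}^{\perp}\}$, and for every $A \in \mathcal{F}$ one has $\|A\| \geq \lambda_1(A) \geq \vartheta(G)$ by Lov\'asz's min formulation. The conclusion $\lambda_1(A) = \vartheta(G) = \|A\|$ is therefore exactly the assertion that a common optimizer attains both the $\lambda_1$-minimum and the $\|\cdot\|$-minimum over $\mathcal{F}$, i.e., that $d_\infty(\overline{G}, R_{\overline{G}}) = \vartheta(G)$; only the inequality $d_\infty \le \vartheta(G)$ needs proof.

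My first step would be to dualize both SDPs. Writing $\vartheta(G) = \min_L \lambda_1(R_{\overline{G}} + L)$ and taking the SDP dual produces Lov\'asz's classical dual
\[
\vartheta(G) = \max\{\mathbf{1}^T Y \mathbf{1} : Y \geq 0,\ \operatorname{tr}(Y) = 1,\ Y_{ij} = 0 \text{ for } i \sim j\}.
\]
Writing instead $d_\infty(\overline{G}, R_{\overline{G}}) = \min_L \|R_{\overline{G}} + L\|$ as $\min \lambda$ subject to $-\lambda I \leq A \leq \lambda I$ and dualizing gives the same objective $\mathbf{1}^T Z \mathbf{1}$ maximized over symmetric $Z$ supported on the diagonal plus non-edges of $G$, but subject only to the trace-norm bound $\|Z\|_1 \leq 1$ in place of $Z \geq 0$ together with $\operatorname{tr}(Z) = 1$. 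Thus the desired equality is equivalent to showing that a maximizer of the latter problem can always be chosen positive semidefinite.

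The hypothesis enters through the paper's already-stated max-formulation $\vartheta(\overline{G}) = \sup\{\|I + K\| : I + K \geq 0,\ K \in \cl S_{\overline{G}}^{\perp}\}$. Combined with $\vartheta(\overline{G}) \leq 2$, this translates to the structural assertion that every positive semidefinite matrix $W$ with unit diagonal whose off-diagonal support lies inside the edge set of $G$ satisfies $\|W\| \leq 2$. The final step---where I expect the main obstacle---is to leverage this bound to force an optimal $Z$ to be PSD. The strategy I would pursue is by contradiction: given an optimum $Z$ with nontrivial negative spectral part $Z_-$, use the support constraint $Z \in \cl S_{\overline{G}}$ together with a Schur-complement decomposition to build, out of $Z_-$, a positive semidefinite matrix $W$ supported on the diagonal plus the edges of $G$, of unit diagonal, whose operator norm strictly exceeds $2$; this contradicts the hypothesis and therefore forces $Z_- = 0$ at any optimum, whence $d_\infty \le \vartheta(G)$. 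The bulk of the argument lives in converting the negative spectral part of a matrix supported on non-edges of $G$ into a PSD witness supported on its edges, which is precisely the point at which the combinatorial structure of $G$ interacts with the two positivity constraints of the SDPs; a primal fallback would be to perturb any Lov\'asz min-optimizer $A_0 = R_{\overline{G}} + L_0$ within $\cl S_{\overline{G}}^{\perp}$ in a direction that raises $\lambda_n(A_0)$ while preserving $\lambda_1(A_0) = \vartheta(G)$, and to use the same hypothesis to show that such a perturbation always exists unless $\lambda_n(A_0) \ge -\vartheta(G)$ already.
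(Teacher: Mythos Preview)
Your reformulation is correct: the theorem is exactly the equality $d_\infty(\overline{G},R_{\overline{G}})=\vartheta(G)$, and your dual pair is set up correctly. But the proposal stops precisely at the step that carries all the content. You say you would take an optimal $Z\in\cl S_{\overline{G}}$ with $\|Z\|_1\le 1$, assume $Z_-\neq 0$, and then ``use the support constraint together with a Schur-complement decomposition'' to manufacture a PSD matrix $W$ with unit diagonal supported on the \emph{edges} of $G$ and $\|W\|>2$. There is no mechanism offered for this: $Z$ lives on the diagonal plus the \emph{non}-edges of $G$, its spectral parts $Z_\pm$ have no sparsity pattern at all, and there is no evident Schur-complement identity that transports mass from the non-edge pattern to the edge pattern. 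The primal fallback (``perturb a Lov\'asz minimizer inside $\cl S_{\overline{G}}^{\perp}$ to raise $\lambda_n$ without raising $\lambda_1$'') is likewise only a wish; you give no perturbation direction and no argument that the hypothesis $\vartheta(\overline{G})\le 2$ guarantees one exists. As written, this is a plan with the hard lemma left blank.

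The paper avoids this dual/perturbation difficulty entirely by working with a \emph{specific} Lov\'asz minimizer, not an arbitrary one. Lov\'asz's Theorem~3 provides an optimal $A$ of the form
\[
\vartheta(G)\,I - A \;=\; \bigl[(c-\sqrt{\vartheta(G)}\,u_i)\cdot(c-\sqrt{\vartheta(G)}\,u_j)\bigr]_{i,j},
\]
where $(u_i)$ is an optimal orthonormal representation of $G$ with handle $c$ and $c\cdot u_i=\vartheta(G)^{-1/2}$ for every $i$. Expanding this Gram matrix gives $-A=\vartheta(G)\,[u_i\!\cdot\! u_j]-\vartheta(G)I-J$. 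The Gram matrix $[u_i\!\cdot\! u_j]$ is PSD, has unit diagonal, and---because $u_i\perp u_j$ on non-edges of $G$---is supported on the diagonal plus the edges of $G$; hence it is exactly one of the competitors in the sup defining $\vartheta(\overline{G})$, so $\|[u_i\!\cdot\! u_j]\|\le\vartheta(\overline{G})\le 2$. One line then gives
\[
-\lambda_n(A)=\max_{\|h\|=1}\langle -Ah,h\rangle\le \vartheta(G)\cdot 2-\vartheta(G)-0=\vartheta(G),
\]
since $J\ge 0$. The point you are missing is that the hypothesis plugs in \emph{directly} once you have the orthonormal-representation Gram matrix in hand; no dual witness construction is needed.
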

\begin{proof}
By [5, Theorem 3] there exist a matrix $A$ satisfying (1) and $\vartheta(G)=\lambda_1(A)$ such that 
\begin{equation*}
\vartheta(G)I-A=(c-\sqrt{\vartheta(G)}\cdot u_i)^T(c-\sqrt{\vartheta(G)}\cdot u_j) \ \ \ \ \ (*)
\end{equation*}
with optimal orthonormal representation $(u_1,u_2,...,u_n)$ of $G$ with handle $c$ such that $\vartheta(G)=\frac{1}{(c^Tu_1)^2}=\cdots=\frac{1}{(c^Tu_n)^2}$. We must show that $-\lambda_n(A)\leq \vartheta(G)$. By $(*)$ we get that,
\begin{align*}
-A&=(c-\sqrt{\vartheta(G)}\cdot u_i)^T(c-\sqrt{\vartheta(G)}\cdot u_j)-\vartheta(G)I\\
&=c^Tc-\sqrt{\vartheta(G)}\cdot u_i^Tc-\sqrt{\vartheta(G)}\cdot c^Tu_j+\vartheta(G)\cdot u_i^Tu_j-\vartheta(G)I\\
&=1-1-1+\vartheta(G)\cdot u_i^Tu_j-\vartheta(G)I\\
&=\vartheta(G)\cdot u_i^Tu_j-\vartheta(G)I-J
\end{align*}
Now pick a unit vector $h$ such that $-\lambda_n(A)=\inner{-Ah}{h}$ and notice that,
\begin{align*}
-\lambda_n(A)&=\inner{-Ah}{h}\leq \vartheta(G)\inner{u_i^Tu_jh}{h}-\vartheta(G)\inner{h}{h}\\
&\leq\vartheta(G)||u_i^Tu_j||-\vartheta(G)=\vartheta(G)||I+H||-\vartheta(G).
\end{align*}
for some $H\in \cl S_G$. Now since $\vartheta(\overline{G})=\max \{||I+H|| : H\in \cl S_G\}$ we get, $$\vartheta(G)\vartheta(\overline{G})-\vartheta(G)\leq\vartheta(G).$$
\end{proof}

\begin{cor}
If $\vartheta(G)\leq 2$ then $\vartheta(\overline{G})=d_{\infty}(G, R_G)$.
\end{cor}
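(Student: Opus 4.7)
The plan is to apply the preceding theorem with $G$ replaced by $\overline{G}$. The hypothesis then becomes $\vartheta(\overline{\overline{G}}) = \vartheta(G) \le 2$, which is exactly what the corollary assumes. This application produces a real symmetric matrix $A$ whose entries satisfy $a_{ij} = 1$ whenever $i = j$ or whenever $i \nsim j$ in $\overline{G}$ --- equivalently, whenever $i = j$ or $(i,j) \in G$ --- and which moreover satisfies $\lambda_1(A) = \vartheta(\overline{G}) = \|A\|$.

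Next I set $K := A - R_G$. Because $A$ agrees with $R_G$ on the diagonal and on the edges of $G$, the matrix $K$ is supported on the edges of $\overline{G}$; hence $K \in \cl S_G^{\perp}$ and $K = K^* = K^t$. Using the expression
\[ d_\infty(G, R_G) = \inf\{\|R_G + K'\| : K' = {K'}^* = {K'}^t \in \cl S_G^{\perp}\} \]
recorded earlier (via Proposition~\ref{achieving}), this immediately yields
\[ d_\infty(G, R_G) \;\le\; \|R_G + K\| \;=\; \|A\| \;=\; \vartheta(\overline{G}). \]
The reverse inequality $\vartheta(\overline{G}) \le d_\infty(G, R_G)$ was already noted at the beginning of this section and holds for every graph (since $\lambda_1 \le \|\cdot\|$ on self-adjoint matrices, so the inf over self-adjoint $R_G+K$ of the larger quantity $\|R_G+K\|$ majorises that of $\lambda_1(R_G+K)$). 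Combining the two inequalities yields the claimed equality.

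The only step that requires care is the bookkeeping involved in the $G \leftrightarrow \overline{G}$ swap: one must verify that, after this exchange, the ``free'' off-diagonal coordinates appearing in the conclusion of the theorem land on the edges of $\overline{G}$, so that the correction $K = A - R_G$ sits in $\cl S_G^{\perp}$ rather than in $\cl S_{\overline{G}}^{\perp}$. Once this identification is made, the corollary is a one-line consequence of the previous theorem and the always-valid lower bound, so I do not foresee any genuinely hard estimate in the argument.
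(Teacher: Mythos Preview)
Your proposal is correct and is precisely the argument the paper intends: the corollary is stated without proof, and the only content is the $G\leftrightarrow\overline{G}$ substitution in the preceding theorem together with the standing inequality $\vartheta(\overline{G})\le d_\infty(G,R_G)$. Your bookkeeping---that after the swap condition~(1) forces $A$ to agree with $R_G$ on the diagonal and on edges of $G$, so $K=A-R_G\in\cl S_G^{\perp}$---is exactly what is needed.
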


The condition $\vartheta(G) \le 2$ is quite restrictive. It is met by $K_n, C_4,
K_{2,\ldots, 2}$ and some graphs that are ``nearly'' complete.

%It follows from the theorem that for even cycles is true. After checking numerically for the 5, 7, 9 cycle our inequality does not hold for the 9 cycle. By theorem 4.5 of Davidson's paper $||P_{C_9}||\approx 1.39084$ and $\chi(C_9)=3$, but $$\vartheta(\overline{C_9})=\frac{9}{\frac{9\cos(\frac{\pi}{9})}{1+\cos(\frac{\pi}{9})}}=\frac{1+\cos(\frac{\pi}{9})}{\cos(\frac{\pi}{9})}\approx2.06418\ngeq\frac{3}{1.39084}\approx 2.15697.$$ But on the other hand, for Kneser graphs of the form $K(2n+1,n)$ our result is true. {\bf{We should look for a graph (or a family) where $\chi(G) \to \infty$}}.
%%%%%%%%%%%%%%%%%%%%%%%%%%%
%%%%%%%%%%%%%%%%%%%%%%%%%%%%

%One thing to keep in mind is that corollary \ref{newsandwich} gives a nice sand%wich theorem for the chromatic number.
%\begin{cor}If If $\vartheta(\overline{G})=d_{\infty}(G, R_G)$, then
%$$\frac{\chi(G)}{||{P_G}||} \leq\vartheta(\overline{G})\leq \chi(G) $$
%\end{cor}
%For a paths, this does not hold, ${\chi(G)=\omega(G)}$, but $||{P_G}||> 1$.

%%%%%%%%%%%%%%%%%%%%%%%%%

\end{document}